\newtheorem{theorem}{Theorem}[section]
\newtheorem{definition}{Definition}[section]
\theoremstyle{definition}
\newtheorem{remark}{Remark}[section]
\newtheorem{example}{Example}[section]
\newtheorem{note}{Note}[section]
\newcommand{\be}{\begin{equation}}
\newcommand{\ee}{\end{equation}}
\newcommand{\bea}{\begin{eqnarray}}
\newcommand{\eea}{\end{eqnarray}}
\newcommand{\beb}{\begin{eqnarray*}}
\newcommand{\eeb}{\end{eqnarray*}}
\newcommand{\norm}[1]{\left\lVert#1\right\rVert}
\numberwithin{equation}{section}
\begin{document}
\title[Rough $\mathcal{I}_2$-convergence in IFNS]{Rough ideal convergence of double sequences in intuitionistic fuzzy normed spaces}

\author[R. Mondal, N. Hossain]{Rahul Mondal$^1$, Nesar Hossain$^2$}

\address{$^{1}$Department of Mathematics, Vivekananda Satavarshiki Mahavidyalaya, Manikpara, Jhargram -721513, West Bengal, India.}
\address{$^2$Department of Mathematics, The University of Burdwan,  Burdwan - 713104, West Bengal, India.}

\email{imondalrahul@gmail.com$^1$; nesarhossain24@gmail.com$^2$}

\subjclass[2020]{Primary: 40A35; Secondary: 03E72}
\keywords{Ideal, filter, Intuitionistic fuzzy normed space, double sequence, rough $\mathcal{I}_2$-convergence, rough $\mathcal{I}_2$-cluster point.}
\begin{abstract}
The idea of rough statistical convergence for double sequences was studied by \"{O}zcan and Or\cite{Ozcan} in a intuitionistic fuzzy normed space. Recently the same has been generalized  in the ideal context by Hossain and Banerjee\cite{Hossain2022} for sequences. Here in this paper we have discussed the idea of rough ideal convergence of double sequences in intuitionistic fuzzy normed spaces generalizing the idea of rough statistical convergence of double sequences. Also we have defined rough $\mathcal{I}_2$-cluster points for a double sequence and also investigated some of the basic properties associated with rough $\mathcal{I}_2$-limit set of a double sequence in a intuitionistic fuzzy normed space.
\end{abstract}
\maketitle
\section{Introduction}
 \noindent The concept of fuzzy sets was firstly introduced by Zadeh \cite{Zadeh} in $1965$ as a generalization of the concept of crisp set. A wide range of extensive applications in various branches of modern science and engineering can be found in \cite{Barros, Fradkov, Giles, Hong, Madore}. The idea of intuitionistic fuzzy sets was firstly given by Atanassov\cite{Atanassov} in $1986$ and later in $2004$, Park\cite{Park} introduced the concept of intuitionistic fuzzy metric spaces using this idea of  Atanassov. In 2006 Saadati and Park\cite{Saadati 2006} extended this concept to the theory of intuitionistic fuzzy normed spaces. Many authors have worked on intuitionistic fuzzy normed spaces\cite{Antal, Ozcan} using the idea of Saadati and Park\cite{Saadati 2006}.\\  
\indent After the introduction of the notion statistical convergence of sequences by Fast \cite{Fast} and Steinhaus \cite{Steinhaus} independently generalizing the the notion of ordinary convergence of sequences, Kostyrko et al. \cite{Kostyrko Salat Wilczynski} introduced two interesting extensions of this idea as $\mathcal{I}$ and $\mathcal{I}^*$-convergence of sequences using the structure of an ideal, formed by subsets of natural numbers. Several works by many authors in different directions can be found in \cite{Kumar, Karakaya, Konwar, Kirisci, Khan, Kisi, Mohiuddine} using the idea of \cite{Fast, Steinhaus, Kostyrko Salat Wilczynski}.\\
\indent In $2001$, Phu\cite{Phu2001} introduced the idea of rough convergence of sequences in finite dimensional normed linear spaces as a generalization of ordinary convergence of sequences. In \cite{Phu2001} he investigated some topological and geometrical properties of rough limit set of a sequence and introduced the idea of rough cauchy sequences. In 2003 this concept was extended to the infinite dimensional normed linear spaces by Phu\cite{Phu2003}. Later the idea of Phu\cite{Phu2001, Phu2003} was extended to rough statistical convergence using the concept of natural density by Ayter\cite{Ayter}. The concept of rough statistical convergence of sequences was extended to rough ideal convergence of sequences in 2013 by Pal et al.\cite{Pal2013}. Later many works\cite{Banerjee, Dundar, Esi, Ghosal, Gumus} have been done by many authors using the idea of Phu. \\
\indent In intuitionistic fuzzy normed spaces the idea of rough statistical convergence of sequences was defined by Antal et al. \cite{Antal} and later \"{O}zcan and Or\cite{Ozcan} studied the same notion in the setting of double sequences. Recently the idea of \"{O}zcan and Or\cite{Ozcan} has been generalized in the ideal context by Hossain and Banerjee\cite{Hossain2022} for sequences. In this paper we have generalized this concept in an ideal context and investigated some of the important results using the idea of \cite{Ozcan}.  
\section{Preliminaries}
Throughout $\mathbb{N}$ and $\mathbb{R}$ denote the set of natural numbers and set of reals respectively. First we recall some basic definitions and notations.
\begin{definition}\cite{Kostyrko Salat Wilczynski}
Let $X\neq \emptyset$. A class $\mathcal{I}$ of subsets of $X$ is said to be an ideal in $X$ if the following conditions hold:
\begin{enumerate}
    \item $\emptyset\in X$;
    \item $A,B\in \mathcal{I}\implies A\cup B\in \mathcal{I}$;
    \item $A\in\mathcal{I}$, $B\subset A\implies B\in\mathcal{I}$.
\end{enumerate}
\end{definition}
An ideal $\mathcal{I}$ is called non trivial if $X\notin  \mathcal{I}$ and proper if $\mathcal{I}\neq \{\emptyset\}$. A non trivial ideal is called admissible if $\{x\}\in\mathcal{I}$ for each $x\in X$.

\begin{definition}\cite{Kostyrko Salat Wilczynski}
    A non empty family $\mathcal{F}$ of subsets of a non empty set $X$ is said to be filter of $X$ if the following conditions hold:
    \begin{enumerate}
        \item $\emptyset \notin \mathcal{F}$;
        \item $A,B\in \mathcal{F}\implies A\cap B\in\mathcal{F}$;
        \item $A\in \mathcal{F}, A\subset B\implies B\in \mathcal{F}$.
    \end{enumerate}
\end{definition}
If $\mathcal{I}$ is a non trivial ideal of $X$ then the family $\mathcal{F(I)}=\{X\setminus A: A\in \mathcal{I}\}$ is a filter on $X$, called filter associated with the ideal $\mathcal{I}$. Throughout the paper $\mathcal{I}$ will stand for a non trivial admissible ideal in $\mathbb{N}$.

\begin{definition}\cite{Das 2008}
  A non trivial ideal $\mathcal{I}_2$ of $\mathbb{N}\times \mathbb{N}$ is called strongly admissible if $\{i\}\times \mathbb{N}$ and $\mathbb{N}\times \{i\}$ belong to $\mathcal{I}_2$ for each $i\in\mathbb{N}$.  
\end{definition}
It is clear that a strongly admissible ideal is admissible also. 

\begin{definition}
Let $K\subset \mathbb{N}$. Then the natural density of $K$ is denoted by $\delta(K)$ and is defined by $$\delta(K)=\lim_{n\to\infty}\frac{1}{n}|\{k\leq n: k\in K\}|,$$ provided the limit exists, where $|\cdot|$ designates the number of elements of the enclosed set. 
\end{definition}
It is clear that if $K$ is finite then $\delta (K)=0$.

\begin{definition}\cite{Mursaleen Edely}
The double natural density of the set $A\subset \mathbb{N}\times\mathbb{N}$ is defined by $$\delta_2(A)= \lim_{m,n\to\infty}\frac{|\{(i,j)\in A: i\leq m, j\leq n\}|}{mn}$$
  where $|\{(i,j)\in A: i\leq m, j\leq n\}|$ denotes the number of elements of $A$ not exceeding $m$ and $n$, respectively. It is clear that if $A$ is finite then $\delta_2(A)=0$. 
\end{definition}

\begin{definition}\cite{Das 2008}
A non trivial ideal $\mathcal{I}_2$ of $\mathbb{N}\times \mathbb{N}$ is said to be strongly admissible if $\{i\}\times \mathbb{N}$ and  $\mathbb{N}\times \{i\}$ belong to $\mathcal{I}_2$ for each $i\in \mathbb{N}$. 
\end{definition}
It is clear that a strongly admissible ideal is also admissible. Throughout the discussion $\mathcal{I}_2$ stands for an admissible ideal of $\mathbb{N}\times \mathbb{N}$.

\begin{definition}(see \cite{Das 2008})
  A double sequence $\{x_{mn}\}$ of real numbers is said to be convergent to $\xi\in\mathbb{R}$ if for any $\varepsilon>0$ there exists $\mathcal{N}_\varepsilon\in\mathbb{N}$ such that $|x_{mn}-\xi|<\varepsilon$ for all $m,n\geq \mathcal{N}_\varepsilon$. 
\end{definition}

\begin{definition}\cite{Mursaleen Edely}
  A double sequence $\{x_{mn}\}_{m,n\in\mathbb{N}}$ of real numbers is said to be statistically convergent to $\xi\in \mathbb{R}$ if for any $\varepsilon>0$, we have $\delta_2(A(\varepsilon))=0$ where $A(\varepsilon)=\{(m.n)\in\mathbb{N}\times \mathbb{N}: |x_{mn}-\xi|\geq \varepsilon\}$.  
\end{definition}

\begin{definition}(see \cite{Das 2008})
     A double sequence $\{x_{mn}\}_{m,n\in\mathbb{N}}$ of real numbers is said to be $\mathcal{I}_2$-convergent to $\xi\in \mathbb{R}$ if for every $\varepsilon>0$, the set $\{(m,n)\in\mathbb{N}\times\mathbb{N}: |x_{mn}-\xi|\geq \varepsilon \}\in \mathcal{I}_2$.
\end{definition}

\begin{remark}(see \cite{Das 2008})
$(a)$ If we take $\mathcal{I}_2=\mathcal{I}_2^0$, where $\mathcal{I}_2^0=\{A\subset \mathbb{N}\times\mathbb{N}: \exists\ m (A)\in\mathbb{N}: i,j\geq m (A)\implies (i,j)\notin A\}$, then $\mathcal{I}_2^0$ will be a non trivial strongly admissible ideal. In this case $\mathcal{I}_2$-convergence coincides with ordinary convergence of double sequences of real numbers.\\
$(b)$ If we take $\mathcal{I}_2=\mathcal{I}_2^\delta$, where $\mathcal{I}_2^\delta=\{A\subset \mathbb{N}\times \mathbb{N}: \delta_2(A)=0\}$, then $\mathcal{I}_2^\delta$-convergence becomes statistical convergence of double sequences of real numbers.
\end{remark}

Now, we recall some basic definitions and notations which will be useful in the sequal.

\begin{definition}\cite{Sklar}
A binary operation $\star : [0,1]\times [0,1]\rightarrow [0,1]$ is said to be a continuous $t$-norm if the following conditions hold:
\begin{enumerate}
    \item $\star$ is associative and commutative;
    \item $\star$ is continuous;
    \item $x\star 1=x$ for all $x\in [0,1]$;
    \item $x\star y\leq z\star w$ whenever $x\leq z$ and $y\leq w$ for each $x,y,z,w\in [0,1]$.
\end{enumerate}
\end{definition}

\begin{definition}\cite{Sklar}
A binary operation $\circ : [0,1]\times [0,1]\rightarrow [0,1]$ is said to be a continuous $t$-conorm if the following conditions are satisfied: 
\begin{enumerate}
    \item $\circ$ is associative and commutative;
    \item $\circ$ is continuous;
    \item $x\circ 0=x$ for all $x\in [0,1]$;
    \item $x\circ y\leq z\circ w$ whenever $x\leq z$ and $y\leq w$ for each $x,y,z,w\in [0,1]$.
\end{enumerate}
\end{definition}

\begin{example}\cite{Klement}
The following are the examples of $t$-norms:
\begin{enumerate}
    \item $x\star y=min\{x,y\}$;
    \item $x\star y=x.y$;
    \item $x\star y= max\{x+y-1,0\}$. This $t$-norm is known as Lukasiewicz $t$-norm.
\end{enumerate}
\end{example}

\begin{example}\cite{Klement}
The following are the examples of $t$-conorms:
\begin{enumerate}
    \item $x\circ y=max\{x,y\}$;
    \item $x\circ y=x+y-x.y$;
    \item $x\circ y=min\{x+y,1\}$. This is known as Lukasiewicz $t$-conorm.
\end{enumerate}
\end{example}

\begin{definition}\cite{Saadati 2006}
The $5$-tuple $(X,\mu,\nu,\star,\circ)$ is said to be an intuitionistic fuzzy normed space (in short, IFNS) if $X$ is a normed linear space, $\star$ is a continuous $t$-norm, $\circ$ is a continuous $t$-conorm and $\mu$ and $\nu$ are the fuzzy sets on $X\times (0,\infty)$ satisfying the following conditions for every $x,y\in X$ and $s,t>0$:
\begin{enumerate}
    \item $\mu(x,t)+\nu(x,t)\leq 1$;
    \item $\mu(x,t)>0$;
    \item $\mu(x,t)=1$ if and only if $x=0$;
    \item $\mu(\alpha x,t)=\mu(x,\frac{t}{|\alpha|})$ for each $\alpha\neq 0$;
    \item $\mu(x,t)\star \mu(y,s)\leq \mu(x+y,t+s)$;
    \item $\mu(x,t): (0,\infty)\rightarrow [0,1]$ is continuous in $t$;
    \item $\lim_{t\to\infty} \mu(x,t)=1$ and $\lim_{t\to 0}\mu(x,t)=0$;
    \item $\nu(x,t)<1$;
    \item $\nu(x,t)=0$ if and only if $x=0$;
    \item $\nu(\alpha x,t)=\nu(x,\frac{t}{|\alpha|})$ for each $\alpha\neq 0$;
    \item $\nu(x,t)\circ \nu(y,s)\geq \nu(x+y,s+t)$;
    \item $\nu(x,t): (0,\infty)\rightarrow [0,1]$ is continuous in $t$;
    \item $\lim_{t\to\infty} \nu(x,t)=0$ and $\lim_{t\to 0}\nu(x,t)=1$.
\end{enumerate}
In this case $(\mu,\nu)$ is called an intuitionistic fuzzy norm on $X$. 
\end{definition}

\begin{example}\cite{Saadati 2006}
Let $(X,\norm{\cdot})$ be a normed space. Denote $a\star b=ab$ and $a\circ b=min\{a+b,1\}$ for all $a,b\in [0,1]$ and let $\mu$ and $\nu$ be fuzzy sets on $X\times (0,\infty)$ defined as follows:\\
$$\mu(x,t)=\frac{t}{t+\norm{x}}, \  \nu(x,t)=\frac{\norm{x}}{t+\norm{x}}.$$ Then $(X,\mu,\nu,\star,\circ)$ is an intuitionistic fuzzy normed space.
\end{example}

\begin{definition}\cite{Saadati Park}
Let $(X,\mu,\nu,\star,\circ)$ be an IFNS with intuitionistic fuzzy norm $(\mu,\nu)$. For $r>0$, the open ball $B(x,\lambda,r)$ with center $x\in X$ and radius $0<\lambda<1$, is the set $$B(x,\lambda,r)=\{y\in X: \mu(x-y,r)>1-\lambda, \ \nu(x-y,r)<\lambda\}.$$
\end{definition}
Similarly,  closed ball is the set $\overline{B(x,\lambda,r)}=\{y\in X: \mu(x-y,r)\geq 1-\lambda, \ \nu(x-y,r)\leq \lambda\}$.

\begin{definition}\cite{Sen}\label{defi2.14}
Let $\{x_n\}_{n\in\mathbb{N}}$ be a sequence in an IFNS  $(X,\mu,\nu,\star,\circ )$. Then a point $\gamma\in X$ is called a $\mathcal{I}$-cluster point of $\{x_n\}_{n\in\mathbb{N}}$ with respect to the intuitionistic fuzzy norm $(\mu,\nu)$ if for every $\varepsilon>0$, $\lambda\in (0,1)$, the set $\{n\in\mathbb{N}: \mu(x_n-\gamma,\varepsilon)>1-\lambda \ \text{and}\ \nu(x_n-\gamma,\varepsilon)<\lambda \}\notin\mathcal{I}$. 
\end{definition}

\begin{definition}\cite{Mursaleen 2009}
     Let $\{x_{mn}\}$ be a double sequence in an IFNS $(X,\mu,\nu,\star,\circ )$. Then  $\{x_{mn}\}$ is said to be convergent to $\xi\in X$ with respect to the intuitionistic fuzzy norm $(\mu,\nu)$ if for every $\varepsilon>0$ and $\lambda\in(0,1)$ there exists $\mathcal{N}_\varepsilon\in \mathbb{N}$ such that $\mu(x_{mn}-\xi,\varepsilon)>1-\lambda$ and $\nu(x_{mn}-\xi,\varepsilon)<\lambda$ for all $m,n\geq \mathcal{N}_\varepsilon$. In this case we write $(\mu,\nu)\text{-}\lim x_{mn}=\xi$ or $x_{mn}\xrightarrow{(\mu,\nu)}\xi$.
\end{definition}

\begin{definition}\cite{Mursaleen 2010}
Let $\mathcal{I}_2$ be a non trivial ideal of $\mathbb{N}\times \mathbb{N}$ and $(X,\mu,\nu,\star,\circ)$ be an intuitionistic fuzzy normed space. A double sequence $x=\{x_{mn}\}$ of elements of $X$ is said to be $\mathcal{I}_2$-convergent to $L\in X$ if for each $\varepsilon>0$ and $t>0$, $\{(m,n)\in \mathbb{N}\times \mathbb{N}: \mu(x_{mn}-L,t)\leq 1-\varepsilon \ \text{or}\ \nu(x_{mn}-L,t)\geq \varepsilon\}\in \mathcal{I}_2$. In this case we write $\mathcal{I}_2^{(\mu,\nu)}\textit{-}\lim x=L$ or $x_{mn}\xrightarrow{\mathcal{I}_2^{(\mu,\nu)}}L$.
\end{definition}

\begin{definition}\cite{Antal}
Let $(X,\mu,\nu,\star,\circ)$ be an IFNS with intuitionistic fuzzy norm $(\mu,\nu)$. A sequence $\{x_n\}_{n\in\mathbb{N}}$ in $X$ is said to be rough statistical  convergent to $\xi\in X$ with respect to the norm $(\mu,\nu)$ for some non-negative number $r$ if for every $\varepsilon >0$ and $\lambda\in(0,1)$, $\delta(\{n\in\mathbb{N}: \mu(x_n-\xi,r+\varepsilon)\leq 1-\lambda \ \text{or}\ \nu(x_n-\xi,r+\varepsilon)\geq \lambda\})=0$.
\end{definition}

\begin{definition}\cite{Hossain2022}
Let $\{x_n\}_{n\in\mathbb{N}}$ be a sequence in an IFNS $(X,\mu,\nu,\star,\circ )$ and $r$ be a non-negative number. Then $\{x_n\}_{n\in\mathbb{N}}$ is said to be rough $\mathcal{I}$-convergent to $\xi\in X$ with respect to the intuitionistics fuzzy norm $(\mu,\nu)$ if for every $\varepsilon >0$ and $\lambda\in(0,1)$, $\{n\in\mathbb{N}: \mu(x_n-\xi,r+\varepsilon)\leq 1-\lambda \ \text{or}\ \nu(x_n-\xi,r+\varepsilon)\geq \lambda\}\in \mathcal{I}$. In this case $\xi$ is called $r\text{-}\mathcal{I}_{(\mu,\nu)}$-limit of $\{x_n\}_{n\in\mathbb{N}}$ and  we write $r\text{-}\mathcal{I}_{(\mu,\nu)}\text{-}\lim_{n\to\infty}x_n=\xi$ or $x_n\xrightarrow{r\text{-}\mathcal{I}_{(\mu,\nu)}}\xi$.
\end{definition}

\begin{definition}\cite{Ozcan}
    Let $\{x_{mn}\}$ be a double sequence in an IFNS $(X,\mu,\nu,\star,\circ )$ and $r$ be a non negative real number. Then $\{x_{mn}\}$ is said to be rough convergent (in short $r$-convergent) to $\xi\in X$ with respect to the intuitionistics fuzzy norm $(\mu,\nu)$ if for every $\varepsilon >0$ and $\lambda\in(0,1)$ there exists $\mathcal{N}_{\lambda}\in\mathbb{N}$ such that $\mu(x_{mn}-\xi,r+\varepsilon)>1-\lambda\ \text{and}\ \nu(x_{mn}-\xi,r+\varepsilon)<\lambda$  for all $m,n\geq \mathcal{N}_{\lambda}$. In this case we write $r_2^{(\mu,\nu)}-\lim x_{mn}=\xi \ \text{or}\ x_{mn}\xrightarrow{r_2^{(\mu,\nu)}}\xi$.
\end{definition}

\begin{definition}\cite{Ozcan}
    Let $\{x_{mn}\}$ be a double sequence in an IFNS $(X,\mu,\nu,\star,\circ )$ and $r$ be a non negative real number. Then $\{x_{mn}\}$ is said to be rough statistically convergent to $\xi\in X$ with respect to the intuitionistic fuzzy norm $(\mu,\nu)$ if for every $\varepsilon>0$ and $\lambda\in(0,1)$, $\delta_2(\{(m,n)\in\mathbb{N}\times \mathbb{N}: \mu(x_{mn}-\xi,r+\varepsilon)\leq 1-\lambda\ \text{or}\ \nu(x_{mn}-\xi,r+\varepsilon)\geq \lambda\})=0$. In this case we write $r\text{-}st_2^{(\mu,\nu)}\text{-}\lim x_{mn}=\xi\ \text{or}\ x_{mn}\xrightarrow{r-st_2^{(\mu,\nu)}}\xi$. 
\end{definition}

\section{Main Results}
 We first introduce the notion of rough ideal convergence of double sequences in an IFNS and then investigate some important results associated with rough $\mathcal{I}_2$-cluster points in the same space.

\begin{definition}\label{defi3.1}
   Let $\{x_{mn}\}$ be a double sequence in an IFNS $(X,\mu,\nu,\star,\circ )$ and $r$ be a non negative real number. Then $\{x_{mn}\}$ is said to be rough $\mathcal{I}_2$-convergent to $\xi\in X$ with respect to the intuitionistic fuzzy norm $(\mu,\nu)$ if for every $\varepsilon>0$ and $\lambda\in(0,1)$, $\{(m,n)\in\mathbb{N}\times \mathbb{N}: \mu(x_{mn}-\xi,r+\varepsilon)\leq 1-\lambda \ \text{or}\ \nu(x_{mn}-\xi,r+\varepsilon)\geq \lambda\}\in \mathcal{I}_2$. In this case $\xi$ is called $r\text{-}\mathcal{I}_2^{(\mu,\nu)}$-limit of $\{x_{mn}\}$ and  we write $r\text{-}\mathcal{I}_2^{(\mu,\nu)}\text{-}\lim x_{mn}=\xi\ \text{or}\ x_{mn}\xrightarrow{r-\mathcal{I}_2^{(\mu,\nu)}}\xi$.   
\end{definition}

\begin{remark}
    $(a)$ If we put $r=0$ in Definition \ref{defi3.1} then the notion of rough $\mathcal{I}_2$-convergence with respect to the intuitionistic fuzzy norm $(\mu,\nu)$ coincides with the notion of $\mathcal{I}_2$-convergence with respect to the intuitionistic fuzzy norm $(\mu,\nu)$. So, our main interest is on the fact $r>0$.\\
    $(b)$ If we use $\mathcal{I}_2=\mathcal{I}_2^0$ in Definition \ref{defi3.1}, then the notion of rough $\mathcal{I}_2$-convergence with respect to the intuitionistic fuzzy norm $(\mu,\nu)$ coincides with the notion of rough convergence of double sequences with respect to the intuitionistic fuzzy norm $(\mu,\nu)$.\\
    $(c)$ If we take $\mathcal{I}_2=\mathcal{I}_2^\delta$ in Definition \ref{defi3.1}, then the notion of rough $\mathcal{I}_2$-convergence with respect to the intuitionistic fuzzy norm $(\mu,\nu)$ coincides with the notion of rough statistical convergence of double sequences with respect to the intuitionistic fuzzy norm $(\mu,\nu)$.   
\end{remark}

\begin{note}
From Definition \ref{defi3.1}, we get $r\text{-}\mathcal{I}_2^{(\mu,\nu)}$-limit of $\{x_{mn}\}$ is not unique. So, in this regard we denote $\mathcal{I}_2^{(\mu,\nu)}\text{-}LIM_{x_{mn}}^r$ to mean the set of all $r\text{-}\mathcal{I}_2^{(\mu,\nu)}$-limit of $\{x_{mn}\}$, i.e., $\mathcal{I}_2^{(\mu,\nu)}\text{-}LIM_{x_{mn}}^r=\{\xi\in X: x_{mn}\xrightarrow{r-\mathcal{I}_2^{(\mu,\nu)}}\xi\}$. The double sequence $\{x_{mn}\}$ is called rough $\mathcal{I}_2$-convergent if $\mathcal{I}_2^{(\mu,\nu)}\text{-}LIM_{x_{mn}}^r\neq \emptyset$.
\end{note}

We denote $LIM_{x_{mn}}^{r_{(\mu,\nu)}}$ to mean the set of all rough convergent limits of the double sequence $\{x_{mn}\}$  with respect to the intuitionistic fuzzy norm $(\mu,\nu)$. The sequence $\{x_{mn}\}$ is called rough convergent if $LIM_{x_{mn}}^{r_{(\mu,\nu)}}\neq \emptyset$. If the sequence is unbounded then $LIM_{x_{mn}}^{r_{(\mu,\nu)}}=\emptyset$ \cite{Ozcan}, although in such cases $\mathcal{I}_2^{(\mu,\nu)}\text{-}LIM_{x_{mn}}^r\neq \emptyset$ may  happen which will be shown in the following example.

\begin{example}\label{exmp3.1}
    Let $(X,\norm{\cdot})$ be a real normed linear space and $\mu(x,t)=\frac{t}{t+\norm{x}}$ and $\nu(x,t)=\frac{\norm{x}}{t+\norm{x}}$ for all $x\in X$ and $t>0$. Also, let $a\star b=ab$ and $a\circ b=\min \{a+b,1\}$. Then $(X,\mu,\nu,\star,\circ)$ is an IFNS. Now let us consider ideal $\mathcal{I}_2$ consisting of all those subsets of $\mathbb{N}\times \mathbb{N}$ whose double natural density are zero. Let us consider the double sequence $\{x_{mn}\}$ by $x_{mn}=\begin{cases}
    (-1)^{m+n}, \ \text{if}\ m,n\neq i^2,i\in\mathbb{N}\\
    mn, \ \text{otherwise}
    \end{cases}.$ Then $\mathcal{I}_2^{(\mu,\nu)}\text{-}LIM_{x_{mn}}^r=\begin{cases}
        \emptyset, \ r<1\\
        [1-r,r-1],\ r\geq 1
    \end{cases}$ and $LIM_{x_{mn}}^{r_{(\mu,\nu)}}=\emptyset$ for any $r$.
\end{example}

\begin{remark}
   From Example \ref{exmp3.1}, we have $\mathcal{I}_2^{(\mu,\nu)}\text{-}LIM_{x_{mn}}^r\neq \emptyset$ does not imply that $LIM_{x_{mn}}^{r_{(\mu,\nu)}}\neq \emptyset$. But, whenever $\mathcal{I}_2$ is an admissible ideal then $LIM_{x_{mn}}^{r_{(\mu,\nu)}}\neq \emptyset $ implies $\mathcal{I}_2^{(\mu,\nu)}\text{-}LIM_{x_{mn}}^r\neq \emptyset$ as $\mathcal{I}_2^0\subset \mathcal{I}_2$.
\end{remark}

Now we define $\mathcal{I}_2$-bounded of double sequences in an IFNS analogue to (\cite{Ozcan}, Definition 3.6).
\begin{definition}
    Let $\{x_{mn}\}$ be a double sequence in an IFNS $(X,\mu,\nu,\star,\circ )$. Then $\{x_{mn}\}$ is said to be $\mathcal{I}_2$-bounded with respect to the intuitionistic fuzzy norm $(\mu,\nu)$ if for every $\lambda\in (0,1)$ there exists a positive real number $M$ such that $\{(m,n)\in\mathbb{N}\times \mathbb{N}: \mu(x_{mn},M)\leq 1-\lambda \ \text{or}\ \nu(x_{mn},M)\geq \lambda\}\in \mathcal{I}_2$.
\end{definition}

\begin{theorem}
Let $\{x_{mn}\}$ be a double sequence in an IFNS $(X,\mu,\nu,\star,\circ )$. Then $\{x_{mn}\}$ is $\mathcal{I}_2$-bounded if and only if $\mathcal{I}_2^{(\mu,\nu)}\text{-}LIM_{x_{mn}}^r\neq \emptyset$ for all $r>0$.
\end{theorem}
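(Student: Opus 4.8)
The plan is to prove the two implications of the equivalence separately. For the ``only if'' direction I will show that the zero element $0$ of $X$ belongs to $\mathcal{I}_2^{(\mu,\nu)}\text{-}LIM_{x_{mn}}^r$ for every $r>0$, and for the ``if'' direction I will convert a single rough $\mathcal{I}_2$-limit into a boundedness bound by means of the triangle-type inequalities in conditions (5) and (11) of the definition of an IFNS. Two standing facts will be used throughout: $(i)$ for each fixed $x\in X$, the map $t\mapsto\mu(x,t)$ is non-decreasing and $t\mapsto\nu(x,t)$ is non-increasing on $(0,\infty)$ --- putting $y=0$, $s>0$ in (5) and (11) and using $\mu(0,s)=1$, $\nu(0,s)=0$, $a\star 1=a$, $a\circ 0=a$ yields $\mu(x,t)\le\mu(x,t+s)$ and $\nu(x,t)\ge\nu(x,t+s)$; and $(ii)$ by continuity of $\star$ at $(1,1)$ and of $\circ$ at $(0,0)$, to each $\lambda\in(0,1)$ there corresponds $\lambda_0\in(0,1)$ with $(1-\lambda_0)\star(1-\lambda_0)>1-\lambda$ and $\lambda_0\circ\lambda_0<\lambda$.

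\emph{Sufficiency of $\mathcal{I}_2$-boundedness.} Assume $\{x_{mn}\}$ is $\mathcal{I}_2$-bounded and fix $r>0$, $\varepsilon>0$ and $\lambda\in(0,1)$. By $\mathcal{I}_2$-boundedness applied with the parameter $\lambda$ there is $M>0$ such that $B:=\{(m,n):\mu(x_{mn},M)\le 1-\lambda\ \text{or}\ \nu(x_{mn},M)\ge\lambda\}\in\mathcal{I}_2$. Using the monotonicity in $(i)$ to compare the scales $M$ and $r+\varepsilon$, one verifies that for every $(m,n)\notin B$ one has $\mu(x_{mn}-0,r+\varepsilon)>1-\lambda$ and $\nu(x_{mn}-0,r+\varepsilon)<\lambda$, so that $\{(m,n):\mu(x_{mn}-0,r+\varepsilon)\le 1-\lambda\ \text{or}\ \nu(x_{mn}-0,r+\varepsilon)\ge\lambda\}\subseteq B\in\mathcal{I}_2$ and hence belongs to $\mathcal{I}_2$. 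As $\varepsilon$ and $\lambda$ were arbitrary, $0\in\mathcal{I}_2^{(\mu,\nu)}\text{-}LIM_{x_{mn}}^r$, and as $r>0$ was arbitrary, $\mathcal{I}_2^{(\mu,\nu)}\text{-}LIM_{x_{mn}}^r\neq\emptyset$ for all $r>0$.

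\emph{Necessity of $\mathcal{I}_2$-boundedness.} Assume $\mathcal{I}_2^{(\mu,\nu)}\text{-}LIM_{x_{mn}}^r\neq\emptyset$ for all $r>0$; it suffices to use this for $r=1$. Fix $\xi\in\mathcal{I}_2^{(\mu,\nu)}\text{-}LIM_{x_{mn}}^1$ and $\lambda\in(0,1)$, and pick $\lambda_0$ as in $(ii)$. Applying Definition \ref{defi3.1} with $\varepsilon=1$ and parameter $\lambda_0$, the set $A:=\{(m,n):\mu(x_{mn}-\xi,2)\le 1-\lambda_0\ \text{or}\ \nu(x_{mn}-\xi,2)\ge\lambda_0\}$ belongs to $\mathcal{I}_2$. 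By conditions (7) and (13) choose $M_0>0$ with $\mu(\xi,M_0)>1-\lambda_0$ and $\nu(\xi,M_0)<\lambda_0$, and put $M:=M_0+2$. For $(m,n)\notin A$, conditions (5) and (11) together with the monotonicity of $\star$ and $\circ$ give
\[
\mu(x_{mn},M) \geq \mu(x_{mn}-\xi,2)\star\mu(\xi,M_0) > (1-\lambda_0)\star(1-\lambda_0) > 1-\lambda ,
\]
\[
\nu(x_{mn},M) \leq \nu(x_{mn}-\xi,2)\circ\nu(\xi,M_0) \leq \lambda_0\circ\lambda_0 < \lambda .
\]
Hence $\{(m,n):\mu(x_{mn},M)\le 1-\lambda\ \text{or}\ \nu(x_{mn},M)\ge\lambda\}\subseteq A\in\mathcal{I}_2$, so it lies in $\mathcal{I}_2$; since $\lambda\in(0,1)$ was arbitrary, $\{x_{mn}\}$ is $\mathcal{I}_2$-bounded.

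The verifications via $(i)$ and via conditions (5) and (11) are routine; the point requiring genuine care is the choice of the auxiliary parameter $\lambda_0$ in $(ii)$ --- which is precisely where continuity of the $t$-norm and the $t$-conorm enters --- and, in the sufficiency part, the reconciliation of the scale $M$ produced by $\mathcal{I}_2$-boundedness with the scale $r+\varepsilon$ appearing in the definition of rough $\mathcal{I}_2$-convergence. I expect that bookkeeping, carried solely by the monotonicity of $\mu(x,\cdot)$ and $\nu(x,\cdot)$ in the second variable, to be the main obstacle.
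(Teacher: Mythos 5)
Your necessity half (nonempty rough limit set $\Rightarrow$ $\mathcal{I}_2$-bounded) is correct and is in fact more complete than the paper's, which at that point only remarks that ``almost all $x_{mn}$ are contained in some ball with center $\beta$'' and stops. Your explicit choice of $M_0$ via conditions (7) and (13), followed by the passage to $M=M_0+2$ through (5), (11) and the auxiliary $\lambda_0$ from continuity of $\star$ and $\circ$, is exactly the computation the paper omits.

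The genuine gap is in the sufficiency half, and it sits precisely at the step you label ``bookkeeping'' and postpone: reconciling the scale $M$ produced by $\mathcal{I}_2$-boundedness with the scale $r+\varepsilon$ of Definition \ref{defi3.1}. Monotonicity of $t\mapsto\mu(x_{mn},t)$ gives $\mu(x_{mn},r+\varepsilon)\ge\mu(x_{mn},M)$ only when $r+\varepsilon\ge M$; when $r+\varepsilon<M$ (the typical case, since $M=M(\lambda)$ may be large while $r$ and $\varepsilon$ are small) the inequality runs the wrong way, and the hypothesis $\mu(x_{mn},M)>1-\lambda$ carries no information at the smaller scale $r+\varepsilon$. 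This step cannot be repaired, because the implication ``$\mathcal{I}_2$-bounded $\Rightarrow \mathcal{I}_2^{(\mu,\nu)}\text{-}LIM_{x_{mn}}^r\neq\emptyset$ for \emph{all} $r>0$'' is contradicted by the paper's own Example \ref{exmp3.1}: that sequence is $\mathcal{I}_2$-bounded, yet $\mathcal{I}_2^{(\mu,\nu)}\text{-}LIM_{x_{mn}}^r=\emptyset$ for $r<1$. The paper's proof has the same defect in disguise: it derives $\mu(x_{ij}-\theta,r+M)\ge\mu(x_{ij},M)\star\mu(\theta,r)>1-\lambda$, i.e.\ it verifies the defining condition only for the single choice $\varepsilon=M$, not for every $\varepsilon>0$. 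What either argument actually yields is that $\theta$ lies in the rough limit set for all sufficiently large roughness degrees (depending on the bound $M(\lambda)$), so the equivalence should be stated with ``for some $r>0$'' (as in Phu's original theorem), not ``for all $r>0$''. You were right to identify this reconciliation as the main obstacle; the honest conclusion is that it is an obstruction, not a routine verification.
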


\begin{proof}
First suppose that $\{x_{mn}\}$ is $\mathcal{I}_2$-bounded in $X$ with respect to the intuitionistic fuzzy norm $(\mu,\nu)$. Then for every $\lambda\in (0,1)$ there exists a positive real number $M$ such that $\{(m,n)\in\mathbb{N}\times \mathbb{N}: \mu(x_{mn},M)\leq 1-\lambda \ \text{or}\ \nu(x_{mn},M)\geq \lambda\}\in \mathcal{I}_2$. Let $K=\{(m,n)\in\mathbb{N}\times \mathbb{N}: \mu(x_{mn},M)\leq 1-\lambda \ \text{or}\ \nu(x_{mn},M)\geq \lambda\}$. Now for $(i,j)\in K^c$, we have $\mu(x_{ij}-\theta, r+M)\geq \mu(x_{ij};M)\star \mu(\theta,r)>(1-\lambda)\star 1=1-\lambda$ and $\nu(x_{ij}-\theta, r+M)\leq \nu(x_{ij};M)\circ \nu(\theta,r)< \lambda\circ 0=\lambda$, where $\theta$ is the zero element of $X$. Therefore $\{(i,j)\in\mathbb{N}\times \mathbb{N}: \mu(x_{ij}-\theta,r+M)\leq 1-\lambda \ \text{or}\ \nu(x_{mn},M)\geq \lambda\}\subset K$. Since $K\in \mathcal{I}_2$, $\theta\in \mathcal{I}_2$. Hence $\mathcal{I}_2^{(\mu,\nu)}\text{-}LIM_{x_{mn}}^r\neq \emptyset$. 

Conversely, suppose that $\mathcal{I}_2^{(\mu,\nu)}\text{-}LIM_{x_{mn}}^r\neq \emptyset$. Then there exists $\beta\in \mathcal{I}_2^{(\mu,\nu)}\text{-}LIM_{x_{mn}}^r$ such that for every $\varepsilon>0$ and $\lambda\in (0,1)$ such that $\{(m,n)\in\mathbb{N}\times \mathbb{N}: \mu(x_{mn}-\beta,r+\varepsilon)\leq 1-\lambda \ \text{or}\ \nu(x_{mn}-\beta,r+\varepsilon)\geq \lambda\}\in \mathcal{I}_2$. This shows that almost all $x_{mn}$ are contained in some ball with center $\beta$. Hence $\{x_{mn}\}$ is $\mathcal{I}_2$-bounded. This completes the proof.
\end{proof}

Now we will discuss on some algebraic characterization of rough $\mathcal{I}_2$-convergence in an IFNS.
\begin{theorem}
    Let $\{x_{mn}\}$ and $\{y_{mn}\}$ be two double sequences in an IFNS $(X,\mu,\nu,\star,\circ )$. Then for some $r>0$, the following statements hold:
    \begin{enumerate}
        \item If $x_{mn}\xrightarrow{r-\mathcal{I}_2^{(\mu,\nu)}}\xi$ and $y_{mn}\xrightarrow{r-\mathcal{I}_2^{(\mu,\nu)}}\eta$, then $x_{mn}+y_{mn}\xrightarrow{r-\mathcal{I}_2^{(\mu,\nu)}}\xi+\eta$.
        \item  If $x_{mn}\xrightarrow{r-\mathcal{I}_2^{(\mu,\nu)}}\xi$ and $k(\neq 0)\in\mathbb{R}$, then  $kx_{mn}\xrightarrow{r-\mathcal{I}_2^{(\mu,\nu)}}k\xi$.
    \end{enumerate}
\end{theorem}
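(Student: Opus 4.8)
The plan is to reduce each assertion to showing that a suitable ``bad'' set lies in $\mathcal{I}_2$, exploiting the finite-union closure of the ideal together with the defining axioms of the intuitionistic fuzzy norm. The two workhorses will be the fuzzy triangle inequalities $\mu(x,t)\star\mu(y,s)\le\mu(x+y,t+s)$ and $\nu(x,t)\circ\nu(y,s)\ge\nu(x+y,t+s)$ (axioms (5) and (11)), the scaling identities $\mu(\alpha x,t)=\mu(x,\tfrac{t}{|\alpha|})$ and $\nu(\alpha x,t)=\nu(x,\tfrac{t}{|\alpha|})$ (axioms (4) and (10)), and the monotonicity of $\mu$ (resp.\ $\nu$) in $t$, which follows from (5) and (11) using $\mu(\theta,s)=1$ and $\nu(\theta,s)=0$, where $\theta$ is the zero element.

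For (1), fix $\varepsilon>0$ and $\lambda\in(0,1)$. First I would use the continuity of $\star$ at $(1,1)$ and of $\circ$ at $(0,0)$, together with $1\star 1=1$ and $0\circ 0=0$, to choose $\lambda'\in(0,1)$ such that $(1-\lambda')\star(1-\lambda')\ge 1-\lambda$ and $\lambda'\circ\lambda'\le\lambda$. Next I would split the tolerance $r+\varepsilon$ and introduce the two bad sets
\[
A=\{(m,n):\mu(x_{mn}-\xi,\tfrac{r+\varepsilon}{2})\le 1-\lambda'\ \text{or}\ \nu(x_{mn}-\xi,\tfrac{r+\varepsilon}{2})\ge\lambda'\},
\]
and the analogous set $B$ formed from $y_{mn}-\eta$. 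Invoking the rough $\mathcal{I}_2$-convergence of $\{x_{mn}\}$ and $\{y_{mn}\}$ would place $A,B\in\mathcal{I}_2$, and since $\mathcal{I}_2$ is closed under finite unions, $A\cup B\in\mathcal{I}_2$. On the complement $(A\cup B)^c$ I would apply (5) and (11): writing $(x_{mn}+y_{mn})-(\xi+\eta)=(x_{mn}-\xi)+(y_{mn}-\eta)$ gives $\mu((x_{mn}+y_{mn})-(\xi+\eta),r+\varepsilon)\ge\mu(x_{mn}-\xi,\tfrac{r+\varepsilon}{2})\star\mu(y_{mn}-\eta,\tfrac{r+\varepsilon}{2})>(1-\lambda')\star(1-\lambda')\ge 1-\lambda$, and dually $\nu((x_{mn}+y_{mn})-(\xi+\eta),r+\varepsilon)<\lambda$. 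Hence the bad set of the sum at tolerance $(\varepsilon,\lambda)$ would be contained in $A\cup B\in\mathcal{I}_2$, which is exactly the claim.

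For (2), fix $\varepsilon>0$, $\lambda\in(0,1)$ and $k\neq 0$. Using (4) and (10) I would rewrite the bad set of $\{kx_{mn}\}$ for the limit $k\xi$ via $\mu(kx_{mn}-k\xi,r+\varepsilon)=\mu(x_{mn}-\xi,\tfrac{r+\varepsilon}{|k|})$ and $\nu(kx_{mn}-k\xi,r+\varepsilon)=\nu(x_{mn}-\xi,\tfrac{r+\varepsilon}{|k|})$, so that the bad set for $\{kx_{mn}\}$ coincides with $\{(m,n):\mu(x_{mn}-\xi,\tfrac{r+\varepsilon}{|k|})\le 1-\lambda\ \text{or}\ \nu(x_{mn}-\xi,\tfrac{r+\varepsilon}{|k|})\ge\lambda\}$. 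I would then match $\tfrac{r+\varepsilon}{|k|}$ to the form $r+\varepsilon'$ and read off membership in $\mathcal{I}_2$ directly from the rough $\mathcal{I}_2$-convergence of $\{x_{mn}\}$.

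The main obstacle in both parts is the radius bookkeeping. In (1) the triangle inequality forces a split $r+\varepsilon=t_1+t_2$ at whose arguments $t_1,t_2$ the roughness of each summand must be invoked; the delicate point is to arrange this split so that the radius seen by each summand is compatible with the radius-$r$ hypothesis while the sum is controlled exactly at $r+\varepsilon$, and I expect the symmetric choice $t_1=t_2=\tfrac{r+\varepsilon}{2}$ together with the monotonicity of $\mu,\nu$ in $t$ to be the crux. In (2) the corresponding point is the position of $\tfrac{r+\varepsilon}{|k|}$ relative to $r$: the argument is transparent when $|k|\le 1$, and the case $|k|>1$ is where keeping the radius at $r$ requires the most care. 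Verifying these radius-matching steps, rather than the algebra with $\star$ and $\circ$, is where I would concentrate the effort.
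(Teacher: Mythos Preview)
Your approach is essentially identical to the paper's: the same auxiliary parameter (the paper calls it $s$, you call it $\lambda'$) chosen via continuity of $\star,\circ$, the same symmetric split $t_1=t_2=\tfrac{r+\varepsilon}{2}$ in part~(1), and the same scaling reduction $\mu(kx_{mn}-k\xi,r+\varepsilon)=\mu(x_{mn}-\xi,\tfrac{r+\varepsilon}{|k|})$ in part~(2). The radius-bookkeeping issue you flag as the crux is not addressed in the paper at all---it simply asserts that the sets at radius $\tfrac{r+\varepsilon}{2}$ (resp.\ $\tfrac{r+\varepsilon}{|k|}$) lie in $\mathcal{I}_2$ directly from the hypothesis, without checking whether these radii exceed $r$---so your proposal matches the paper while being more candid about where the genuine difficulty sits.
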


\begin{proof}
    Let $\{x_{mn}\}$ and $\{y_{mn}\}$ be two double sequences in an IFNS $(X,\mu,\nu,\star,\circ )$, $r>0$ and $\lambda\in (0,1)$.
    \begin{enumerate}
        \item Let $x_{mn}\xrightarrow{r-\mathcal{I}_2^{(\mu,\nu)}}\xi$ and $y_{mn}\xrightarrow{r-\mathcal{I}_2^{(\mu,\nu)}}\eta$. Also, let $\varepsilon>0$ be given. Now, for a given $\lambda\in(0,1)$, choose $s\in (0,1)$ such that $(1-s)\star (1-s)>1-\lambda$ and $s\circ s<\lambda$. Then $A,B \in \mathcal{I}_2$, where $A= \{(m,n)\in\mathbb{N}\times \mathbb{N}: \mu(x_{mn}-\xi,\frac{r+\varepsilon}{2})\leq 1-s \ \text{or}\ \nu(x_{mn}-\xi,\frac{r+\varepsilon}{2})\geq s\}$ and $B= \{(m,n)\in\mathbb{N}\times \mathbb{N}: \mu(y_{mn}-\eta,\frac{r+\varepsilon}{2})\leq 1-s \ \text{or}\ \nu(y_{mn}-\eta,\frac{r+\varepsilon}{2})\geq s\}$. So, $A^c\cap B^c\in \mathcal{F(I_\text{2})}$. Now for $(i,j)\in A^c\cap B^c$, we have $\mu(x_{ij}+y_{ij}-(\xi+\eta),r+\varepsilon)\geq \mu(x_{ij}-\xi,\frac{r+\varepsilon}{2} )\star\mu(y_{ij}-\eta,\frac{r+\varepsilon}{2})>(1-s)\star(1-s)>1-\lambda$ and $\nu(x_{ij}+y_{ij}-(\xi+\eta),r+\varepsilon)\leq \nu(x_{ij}-\xi,\frac{r+\varepsilon}{2} )\circ \nu(y_{ij}-\eta,\frac{r+\varepsilon}{2})<s\circ s<\lambda$. Therefore $\{(i,j)\in \mathbb{N}\times \mathbb{N}:\mu(x_{ij}+y_{ij}-(\xi+\eta),r+\varepsilon)\leq 1-\lambda \ \text{or}\  \nu(x_{ij}-\xi,\frac{r+\varepsilon}{2} )\geq \lambda\}\subset A\cup B$. Since $A\cup B\in \mathcal{I}_2$, $\{(i,j)\in \mathbb{N}\times \mathbb{N}:\mu(x_{ij}+y_{ij}-(\xi+\eta),r+\varepsilon)\leq 1-\lambda \ \text{or}\  \nu(x_{ij}-\xi,\frac{r+\varepsilon}{2} )\geq \lambda\}\in\mathcal{I}_2$. Therefore $x_{mn}+y_{mn}\xrightarrow{r-\mathcal{I}_2^{(\mu,\nu)}}\xi+\eta$.
        \item Let $x_{mn}\xrightarrow{r-\mathcal{I}_2^{(\mu,\nu)}}\xi$ and $k(\neq 0)\in\mathbb{R}$. Then, $\{(m,n)\in\mathbb{N}\times \mathbb{N}: \mu(x_{mn}-\xi,\frac{r+\varepsilon}{|k|})\leq 1-\lambda \ \text{or}\ \nu(x_{mn}-\xi,\frac{r+\varepsilon}{|k|})\geq \lambda\}\in \mathcal{I}_2$. Therefore, $\{(m,n)\in\mathbb{N}\times \mathbb{N}: \mu(kx_{mn}-k\xi,r+\varepsilon)\leq 1-\lambda \ \text{or}\ \nu(kx_{mn}-k\xi,r+\varepsilon)\geq \lambda\}\in \mathcal{I}_2$. Hence $kx_{mn}\xrightarrow{r-\mathcal{I}_2^{(\mu,\nu)}}k\xi$. This completes the proof.
    \end{enumerate}
\end{proof}

Now we prove some topological and geometrical properties of the set $\mathcal{I}_2^{(\mu,\nu)}\text{-}LIM_{x_{mn}}^r$. 
\begin{theorem}\label{thm3.3}
     Let $\{x_{mn}\}$ be a double sequence in an IFNS $(X,\mu,\nu,\star,\circ )$. Then for all $r>0$, the set $\mathcal{I}_2^{(\mu,\nu)}\text{-}LIM_{x_{mn}}^r$ is closed.
\end{theorem}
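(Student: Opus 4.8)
The plan is to show that $\mathcal{I}_2^{(\mu,\nu)}\text{-}LIM_{x_{mn}}^r$ contains every point of its closure in the topology induced by the open balls $B(x,\lambda,r)$. So I would begin by fixing a point $y$ in the closure of $\mathcal{I}_2^{(\mu,\nu)}\text{-}LIM_{x_{mn}}^r$ and choosing a sequence $\{y_k\}_{k\in\mathbb{N}}$ in $\mathcal{I}_2^{(\mu,\nu)}\text{-}LIM_{x_{mn}}^r$ with $y_k\xrightarrow{(\mu,\nu)}y$; the goal is then to verify $y\in\mathcal{I}_2^{(\mu,\nu)}\text{-}LIM_{x_{mn}}^r$ straight from Definition \ref{defi3.1}.

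The key step is the standard ``$\varepsilon/2$'' splitting adapted to the $t$-norm and the $t$-conorm. Given $\varepsilon>0$ and $\lambda\in(0,1)$, I would first invoke the continuity of $\star$ and $\circ$ together with the boundary conditions $a\star 1=a$ and $a\circ 0=a$ to pick $s\in(0,1)$ with $(1-s)\star(1-s)>1-\lambda$ and $s\circ s<\lambda$. Since $y_k\xrightarrow{(\mu,\nu)}y$, there is an index $k_0$ with $\mu(y_{k_0}-y,\tfrac{\varepsilon}{2})>1-s$ and $\nu(y_{k_0}-y,\tfrac{\varepsilon}{2})<s$; and since $y_{k_0}\in\mathcal{I}_2^{(\mu,\nu)}\text{-}LIM_{x_{mn}}^r$, the set $A=\{(m,n)\in\mathbb{N}\times\mathbb{N}: \mu(x_{mn}-y_{k_0},r+\tfrac{\varepsilon}{2})\leq 1-s \ \text{or}\ \nu(x_{mn}-y_{k_0},r+\tfrac{\varepsilon}{2})\geq s\}$ belongs to $\mathcal{I}_2$.

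Finally, for $(i,j)\in A^c$ I would apply conditions $(5)$ and $(11)$ of the intuitionistic fuzzy norm with the decomposition $r+\varepsilon=(r+\tfrac{\varepsilon}{2})+\tfrac{\varepsilon}{2}$, together with monotonicity of $\mu(x,\cdot)$ and $\nu(x,\cdot)$, to obtain $\mu(x_{ij}-y,r+\varepsilon)\geq \mu(x_{ij}-y_{k_0},r+\tfrac{\varepsilon}{2})\star\mu(y_{k_0}-y,\tfrac{\varepsilon}{2})>(1-s)\star(1-s)>1-\lambda$ and, symmetrically, $\nu(x_{ij}-y,r+\varepsilon)\leq \nu(x_{ij}-y_{k_0},r+\tfrac{\varepsilon}{2})\circ\nu(y_{k_0}-y,\tfrac{\varepsilon}{2})<s\circ s<\lambda$. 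Hence $\{(m,n)\in\mathbb{N}\times\mathbb{N}: \mu(x_{mn}-y,r+\varepsilon)\leq 1-\lambda \ \text{or}\ \nu(x_{mn}-y,r+\varepsilon)\geq \lambda\}\subseteq A$, so it lies in $\mathcal{I}_2$ by the hereditary property of the ideal, which gives $y\in\mathcal{I}_2^{(\mu,\nu)}\text{-}LIM_{x_{mn}}^r$ and closes the argument.

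I do not anticipate a serious obstacle; the only points requiring care are the existence of the auxiliary constant $s$ (this is precisely where the continuity of $\star$ and $\circ$ is genuinely used) and keeping the direction of the inequalities consistent when the radius is enlarged. If one wishes to avoid sequential arguments altogether, the same estimates apply verbatim with $y_{k_0}$ replaced by any point of $\mathcal{I}_2^{(\mu,\nu)}\text{-}LIM_{x_{mn}}^r$ lying in a sufficiently small ball around $y$, so the proof adapts to settings where the induced topology is not assumed first countable.
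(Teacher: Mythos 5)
Your proposal is correct and follows essentially the same route as the paper's own proof: approximate the closure point by an element of $\mathcal{I}_2^{(\mu,\nu)}\text{-}LIM_{x_{mn}}^r$, choose $s$ with $(1-s)\star(1-s)>1-\lambda$ and $s\circ s<\lambda$ via continuity of $\star$ and $\circ$, split $r+\varepsilon=(r+\tfrac{\varepsilon}{2})+\tfrac{\varepsilon}{2}$, and conclude by the hereditary property of the ideal. The only cosmetic difference is that the paper indexes the approximating points by a double sequence $\{z_{mn}\}$ rather than your single sequence $\{y_k\}$, which changes nothing in the argument.
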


\begin{proof}
    If $\mathcal{I}_2^{(\mu,\nu)}\text{-}LIM_{x_{mn}}^r=\emptyset$ then there is nothing to prove. So, let $\mathcal{I}_2^{(\mu,\nu)}\text{-}LIM_{x_{mn}}^r\neq \emptyset$.   Suppose thet $\{z_{mn}\}$ is a double sequence in $\mathcal{I}_2^{(\mu,\nu)}\text{-}LIM_{x_{mn}}^r$ such that $z_{mn}\xrightarrow{(\mu,\nu)}y_0$. Now, for a given $\lambda\in (0,1)$, choose $s\in (0,1)$ such that $(1-s)\star (1-s)>1-\lambda$ and $s\circ s<\lambda$. Let $\varepsilon>0$ be given. Then there exists $m_0\in \mathbb{N}$ such that $\mu(z_{mn}-y_0,\frac{\varepsilon}{2})>1-s$ and $\nu(z_{mn}-y_0,\frac{\varepsilon}{2})<s$ for all $m,n\geq m_0$. Suppose $i,j>m_0$. Then $\mu(z_{ij}-y_0,\frac{\varepsilon}{2})>1-s$ and $\nu(z_{ij}-y_0,\frac{\varepsilon}{2})<s$. Also, $P=\{(m,n)\in \mathbb{N}\times\mathbb{N}: \mu(x_{mn}-z_{ij},r+\frac{\varepsilon}{2})\leq 1-s\ \text{or}\ \nu(x_{mn}-z_{ij},r+\frac{\varepsilon}{2})\geq s \} \in\mathcal{I}_2$. Now, for $(p,q)\in P^c$, we have $\mu(x_{pq}-y_0,r+\varepsilon)\geq \mu(x_{pq}-z_{ij},r+\frac{\varepsilon}{2})\star\mu(z_{ij}-y_0,\varepsilon)>(1-s)\star (1-s)>1-\lambda$ and $\nu(x_{pq}-y_0,r+\varepsilon)\leq \nu(x_{pq}-z_{ij},r+\frac{\varepsilon}{2})\circ \nu(z_{ij}-y_0,\frac{\varepsilon}{2})<s\circ s<\lambda$. Therefore $\{(m,n)\in \mathbb{N}\times\mathbb{N}: \mu(x_{mn}-y_0,r+\frac{\varepsilon}{2})\leq 1-s\ \text{or}\ \nu(x_{mn}-y_0,r+\frac{\varepsilon}{2})\geq s \}\subset P$. Since $P\in \mathcal{I}_2$, $y_0\in \mathcal{I}_2^{(\mu,\nu)}\text{-}LIM_{x_{mn}}^r$. Therefore $\mathcal{I}_2^{(\mu,\nu)}\text{-}LIM_{x_{mn}}^r$ is closed. This completes the proof.
\end{proof}

\begin{theorem}
    Let $\{x_{mn}\}$ be a double sequence in an IFNS $(X,\mu,\nu,\star,\circ )$. Then for all $r>0$, the set $\mathcal{I}_2^{(\mu,\nu)}\text{-}LIM_{x_{mn}}^r$ is convex.
\end{theorem}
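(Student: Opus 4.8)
The plan is to show directly that $\mathcal{I}_2^{(\mu,\nu)}\text{-}LIM_{x_{mn}}^r$ is stable under convex combinations of any two of its points. So I would fix $\xi_1,\xi_2\in\mathcal{I}_2^{(\mu,\nu)}\text{-}LIM_{x_{mn}}^r$ and $t\in[0,1]$, and aim to prove that $(1-t)\xi_1+t\xi_2\in\mathcal{I}_2^{(\mu,\nu)}\text{-}LIM_{x_{mn}}^r$. The cases $t=0$ and $t=1$ are immediate, so I would assume $t\in(0,1)$, which is precisely what makes the scaling axioms $(4)$ and $(10)$ of an IFNS available for the scalars $1-t$ and $t$.

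Next, given $\varepsilon>0$ and $\lambda\in(0,1)$, I would use the continuity of $\star$ and $\circ$ (as in the proof of Theorem \ref{thm3.3}) to choose $s\in(0,1)$ with $(1-s)\star(1-s)>1-\lambda$ and $s\circ s<\lambda$. Since $\xi_1$ and $\xi_2$ are $r\text{-}\mathcal{I}_2^{(\mu,\nu)}$-limits, the sets
$$A_1=\{(m,n)\in\mathbb{N}\times\mathbb{N}:\mu(x_{mn}-\xi_1,r+\varepsilon)\le 1-s\ \text{or}\ \nu(x_{mn}-\xi_1,r+\varepsilon)\ge s\}$$
and the analogous set $A_2$ (with $\xi_2$ in place of $\xi_1$) both lie in $\mathcal{I}_2$, so $A_1^c\cap A_2^c\in\mathcal{F}(\mathcal{I}_2)$.

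The crucial computational step is the radius split. For $(i,j)\in A_1^c\cap A_2^c$ I would write $x_{ij}-((1-t)\xi_1+t\xi_2)=(1-t)(x_{ij}-\xi_1)+t(x_{ij}-\xi_2)$ and split the radius as $r+\varepsilon=(1-t)(r+\varepsilon)+t(r+\varepsilon)$. Then axiom $(5)$ gives
$$\mu\big(x_{ij}-((1-t)\xi_1+t\xi_2),r+\varepsilon\big)\ge\mu\big((1-t)(x_{ij}-\xi_1),(1-t)(r+\varepsilon)\big)\star\mu\big(t(x_{ij}-\xi_2),t(r+\varepsilon)\big),$$
and axiom $(4)$ collapses each factor: $\mu((1-t)(x_{ij}-\xi_1),(1-t)(r+\varepsilon))=\mu(x_{ij}-\xi_1,r+\varepsilon)>1-s$, and similarly for the other factor, so the right-hand side exceeds $(1-s)\star(1-s)>1-\lambda$. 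The same reasoning with axioms $(11)$ and $(10)$ yields $\nu(x_{ij}-((1-t)\xi_1+t\xi_2),r+\varepsilon)\le\nu(x_{ij}-\xi_1,r+\varepsilon)\circ\nu(x_{ij}-\xi_2,r+\varepsilon)<s\circ s<\lambda$.

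Finally I would conclude that $\{(m,n)\in\mathbb{N}\times\mathbb{N}:\mu(x_{mn}-((1-t)\xi_1+t\xi_2),r+\varepsilon)\le 1-\lambda\ \text{or}\ \nu(x_{mn}-((1-t)\xi_1+t\xi_2),r+\varepsilon)\ge\lambda\}\subset A_1\cup A_2\in\mathcal{I}_2$, and the hereditary property of the ideal finishes the proof. The only real subtlety, and the step I would double-check, is the choice of the weighted split $r+\varepsilon=(1-t)(r+\varepsilon)+t(r+\varepsilon)$ rather than the usual halving used in Theorem \ref{thm3.3}: it is exactly this split that lets the scaling axiom neutralize the scalars $1-t$ and $t$, so that the known $(r+\varepsilon)$-bounds for $\xi_1$ and $\xi_2$ can be reused verbatim. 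Beyond this bookkeeping there is no genuine obstacle.
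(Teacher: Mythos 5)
Your argument is correct, and it follows the paper's overall template (choose $s\in(0,1)$ with $(1-s)\star(1-s)>1-\lambda$ and $s\circ s<\lambda$, work on the intersection of the complements of the two exceptional sets, and conclude by containment in their union), but your radius decomposition is genuinely different from --- and in fact cleaner than --- the one in the paper. The paper splits the radius evenly as $\frac{r+\varepsilon}{2}+\frac{r+\varepsilon}{2}$ and then uses the scaling axiom to rewrite the two factors as $\mu\bigl(x_{mn}-x_1,\frac{r+\varepsilon}{2(1-\kappa)}\bigr)$ and $\mu\bigl(x_{mn}-x_2,\frac{r+\varepsilon}{2\kappa}\bigr)$; it must therefore assert that the exceptional sets at these rescaled radii belong to $\mathcal{I}_2$. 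But $\frac{r+\varepsilon}{2(1-\kappa)}$ need not exceed $r$ (take $\kappa<\tfrac12$ and $\varepsilon\leq r(1-2\kappa)$), so that membership does not follow from the definition of a rough $\mathcal{I}_2$-limit, and since $\mu(x,\cdot)$ is nondecreasing the inclusion goes the wrong way to repair it by monotonicity; the paper's step is thus incomplete for small $\varepsilon$ unless one first passes to a larger $\varepsilon$. Your weighted split $r+\varepsilon=(1-t)(r+\varepsilon)+t(r+\varepsilon)$ makes axiom $(4)$ (resp.\ $(10)$) cancel the scalars exactly, so both factors are evaluated at radius precisely $r+\varepsilon$, where the hypothesis on $\xi_1$ and $\xi_2$ applies verbatim; this sidesteps the issue entirely and is the preferable version of the proof. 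The rest of your write-up (the trivial endpoint cases $t=0,1$, the final containment in $A_1\cup A_2$, and the appeal to the hereditary property of the ideal) is in order.
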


\begin{proof}
    Let $x_1,x_2\in \mathcal{I}_2^{(\mu,\nu)}\text{-}LIM_{x_{mn}}^r$ and $\kappa
    \in (0,1)$. Let $\lambda\in (0,1)$. Choose $s\in (0,1)$ such that $(1-s)\star (1-s)>1-\lambda$ and $s\circ s<\lambda$. Then for every $\varepsilon>0$, the sets $H,T\in \mathcal{I}_2$ where $H=\{(m,n)\in\mathbb{N}\times\mathbb{N}: \mu(x_{mn}-x_1,\frac{r+\varepsilon}{2(1-\kappa)})\leq 1-s\ \text{or}\ \nu(x_{mn}-x_1,\frac{r+\varepsilon}{2(1-\kappa)})\geq s \}$ and $T=\{(m,n)\in\mathbb{N}\times\mathbb{N}: \mu(x_{mn}-x_2,\frac{r+\varepsilon}{2\kappa})\leq 1-s\ \text{or}\ \nu(x_{mn}-x_2,\frac{r+\varepsilon}{2\kappa})\geq s \}$. Now for $(m,n)\in H^c\cap T^c$, we have $\mu(x_{mn}-[(1-\kappa)x_1+\kappa x_2],r+\varepsilon)\geq \mu((1-\kappa)(x_{mn}-x_1),\frac{r+\varepsilon}{2})\star \mu(\kappa(x_{mn}-x_2),\frac{r+\varepsilon}{2})=\mu(x_{mn}-x_1,\frac{r+\varepsilon}{2(1-\kappa)})\star \mu(x_{mn}-x_2,\frac{r+\varepsilon}{2\kappa})>(1-s)\star (1-s)>1-\lambda$ and $\nu(x_{mn}-[(1-\kappa)x_1+\kappa x_2],r+\varepsilon)\leq  \nu((1-\kappa)(x_{mn}-x_1),\frac{r+\varepsilon}{2})\circ \nu(\kappa(x_{mn}-x_2),\frac{r+\varepsilon}{2})=\nu(x_{mn}-x_1,\frac{r+\varepsilon}{2(1-\kappa)})\circ \nu(x_{mn}-x_2,\frac{r+\varepsilon}{2\kappa})< s\circ s<\lambda$, which gives that $\{(m,n)\in\mathbb{N}\times\mathbb{N}:\mu(x_{mn}-[(1-\kappa)x_1+\kappa x_2],r+\varepsilon)\leq 1-\lambda \ \text{or}\ \nu(x_{mn}-[(1-\kappa)x_1+\kappa x_2],r+\varepsilon)\geq \lambda \}\subset H\cup T$. Since $H\cup T\in \mathcal{I}_2$, $(1-\kappa)x_1+\kappa x_2\in \mathcal{I}_2^{(\mu,\nu)}\text{-}LIM_{x_{mn}}^r$. Therefore  $\mathcal{I}_2^{(\mu,\nu)}\text{-}LIM_{x_{mn}}^r$ is convex. This completes the proof.
\end{proof}

\begin{theorem}
    A double sequence $\{x_{mn}\}$ in an IFNS $(X,\mu,\nu,\star,\circ )$ is rough $\mathcal{I}_2$-convergent to $\beta\in X$ with respect to the intuitionistic fuzzy normed spaces $(\mu,\nu)$ for some $r>0$ if there exists a double sequence $\{y_{mn}\}$ in $X$ such that $y_{mn}\xrightarrow{\mathcal{I}_2^{(\mu,\nu)}}\beta$ and for every $\lambda\in (0,1)$, $\mu(x_{mn}-y_{mn},r)>1-\lambda$ and $\nu(x_{mn}-y_{mn},r)<\lambda$ for all $m,n\in \mathbb{N}$.
\end{theorem}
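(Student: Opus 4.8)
The plan is to reduce the claim to a single use of the triangle-type inequalities (conditions (5) and (11) of the IFNS axioms), in the same spirit as the proof of Theorem~\ref{thm3.3}. Fix $r>0$ and let $\varepsilon>0$ and $\lambda\in(0,1)$ be arbitrary; the goal is to show that
\[
E:=\{(m,n)\in\mathbb{N}\times\mathbb{N}:\mu(x_{mn}-\beta,r+\varepsilon)\leq 1-\lambda\ \text{or}\ \nu(x_{mn}-\beta,r+\varepsilon)\geq\lambda\}\in\mathcal{I}_2 .
\]

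First I would use the continuity of $\star$ and $\circ$ together with $a\star 1=a$ and $a\circ 0=a$ to pick $s\in(0,1)$ with $(1-s)\star(1-s)>1-\lambda$ and $s\circ s<\lambda$. Since $y_{mn}\xrightarrow{\mathcal{I}_2^{(\mu,\nu)}}\beta$, the set
\[
A:=\{(m,n)\in\mathbb{N}\times\mathbb{N}:\mu(y_{mn}-\beta,\varepsilon)\leq 1-s\ \text{or}\ \nu(y_{mn}-\beta,\varepsilon)\geq s\}
\]
lies in $\mathcal{I}_2$. The hypothesis on $\{y_{mn}\}$ holds for \emph{every} level in $(0,1)$, hence in particular $\mu(x_{mn}-y_{mn},r)>1-s$ and $\nu(x_{mn}-y_{mn},r)<s$ for all $(m,n)\in\mathbb{N}\times\mathbb{N}$.

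Next, for $(p,q)\in A^{c}$ I would split $x_{pq}-\beta=(x_{pq}-y_{pq})+(y_{pq}-\beta)$ and apply (5) and (11):
\[
\mu(x_{pq}-\beta,r+\varepsilon)\geq\mu(x_{pq}-y_{pq},r)\star\mu(y_{pq}-\beta,\varepsilon)>(1-s)\star(1-s)>1-\lambda ,
\]
\[
\nu(x_{pq}-\beta,r+\varepsilon)\leq\nu(x_{pq}-y_{pq},r)\circ\nu(y_{pq}-\beta,\varepsilon)<s\circ s<\lambda .
\]
Thus no element of $A^{c}$ belongs to $E$, i.e. $E\subseteq A$; since $\mathcal{I}_2$ is an ideal, $E\in\mathcal{I}_2$. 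As $\varepsilon$ and $\lambda$ were arbitrary, Definition~\ref{defi3.1} yields $x_{mn}\xrightarrow{r-\mathcal{I}_2^{(\mu,\nu)}}\beta$, that is $\beta\in\mathcal{I}_2^{(\mu,\nu)}\text{-}LIM_{x_{mn}}^{r}$.

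I do not foresee a genuine obstacle: this is the familiar ``two half-balls'' splitting already used for closedness and convexity. The only points needing care are routing the whole roughness budget $r$ through the term $x_{pq}-y_{pq}$ and the whole $\varepsilon$ through $y_{pq}-\beta$, so that the radii add up to exactly $r+\varepsilon$ with no rescaling, and observing that it is precisely the ``for every $\lambda$'' form of the hypothesis on $x_{mn}-y_{mn}$ that lets us feed in the auxiliary level $s$ without loss.
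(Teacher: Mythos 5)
Your proposal is correct and follows essentially the same route as the paper's proof: the same choice of $s$ with $(1-s)\star(1-s)>1-\lambda$ and $s\circ s<\lambda$, the same set $A$ (the paper's $P$) coming from the $\mathcal{I}_2$-convergence of $\{y_{mn}\}$ at level $s$, and the same decomposition $x_{pq}-\beta=(x_{pq}-y_{pq})+(y_{pq}-\beta)$ with the radii $r$ and $\varepsilon$ allocated exactly as in the paper. Your explicit remark that the ``for every $\lambda$'' form of the hypothesis is what permits feeding in the auxiliary level $s$ is a point the paper uses silently, but it is the same argument.
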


\begin{proof}
    Let $\varepsilon>0$ be given. Now, for a given $\lambda\in(0,1)$ choose $s\in(0,1)$ such that $(1-s)\star (1-s)>1-\lambda$ and $s\circ s<\lambda$. Supose that $y_{mn}\xrightarrow{\mathcal{I}_2^{(\mu,\nu)}}\beta$ and $\mu(x_{mn}-y_{mn},r)>1-s$ and $\nu(x_{mn}-y_{mn},r)<s$ for all $m,n\in \mathbb{N}$. Then the set $P=\{(m,n)\in \mathbb{N}\times \mathbb{N}: \mu(y_{mn}-\beta,\varepsilon)\leq 1-s \ \text{or}\ \nu(y_{mn}-\beta,\varepsilon)\geq s\}\in \mathcal{I}_2$. Now for $(i,j)\in P^c$, we have $\mu(x_{ij}-\beta,r+\varepsilon)\geq \mu(x_{ij}-y_{ij},r)\star \mu(y_{ij}-\beta,\varepsilon)>(1-s)\star (1-s)>1-\lambda$ and $\nu(x_{ij}-\beta,r+\varepsilon)\leq \nu(x_{ij}-y_{ij},r)\circ \nu(y_{ij}-\beta,\varepsilon)<s\circ s<\lambda$. Therefore $\{(i,j)\in\mathbb{N}\times\mathbb{N}: \mu(x_{ij}-\beta,r+\varepsilon)\leq 1-\lambda \ \text{or}\ \nu(x_{ij}-\beta,r+\varepsilon)\geq \lambda \}\subset P$. Since $P\in \mathcal{I}_2$, $\{(i,j)\in\mathbb{N}\times\mathbb{N}: \mu(x_{ij}-\beta,r+\varepsilon)\leq 1-\lambda \ \text{or}\ \nu(x_{ij}-\beta,r+\varepsilon)\geq \lambda \}\in\mathcal{I}_2$. Therefore  $\{x_{mn}\}$ is rough $\mathcal{I}_2$-convergent to $\beta$ with respect to the probabilistic norm $(\mu,\nu)$. This completes the proof.
\end{proof}

\begin{theorem}
    Let $\{x_{mn}\}$ be a double sequence in an IFNS $(X,\mu,\nu,\star,\circ )$. Then there do not exist $\beta_1,\beta_2\in \mathcal{I}_2^{(\mu,\nu)}\text{-}LIM_{x_{mn}}^r$ for some $r>0$ and every $\lambda\in (0,1)$ such that $\mu(\beta_1-\beta_2,mr)\leq 1-\lambda$ and $\nu(\beta_1-\beta_2,mr)\geq \lambda$ for $m(\in\mathbb{R})>2$.
\end{theorem}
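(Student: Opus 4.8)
\emph{Proof proposal.} The plan is to argue by contradiction. Suppose that, for some $r>0$ and some real $m>2$, there exist $\beta_1,\beta_2\in\mathcal{I}_2^{(\mu,\nu)}\text{-}LIM_{x_{mn}}^r$ and $\lambda\in(0,1)$ with $\mu(\beta_1-\beta_2,mr)\leq 1-\lambda$ and $\nu(\beta_1-\beta_2,mr)\geq\lambda$. Exactly as in the earlier proofs I would first use the continuity of $\star$ and $\circ$ together with $1\star 1=1$ and $0\circ 0=0$ to pick $s\in(0,1)$ such that $(1-s)\star(1-s)>1-\lambda$ and $s\circ s<\lambda$. Since $m>2$, I can also fix $\varepsilon>0$ small enough that $2(r+\varepsilon)\leq mr$; concretely $\varepsilon=\tfrac{(m-2)r}{4}$ works, because then $2(r+\varepsilon)=\tfrac{(m+2)r}{2}<mr$.

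Next I would invoke the hypothesis that both $\beta_1$ and $\beta_2$ are $r\text{-}\mathcal{I}_2^{(\mu,\nu)}$-limits of $\{x_{mn}\}$. Applying Definition \ref{defi3.1} with tolerance $s$ and with the number $\varepsilon$ chosen above, the sets $A=\{(i,j)\in\mathbb{N}\times\mathbb{N}:\mu(x_{ij}-\beta_1,r+\varepsilon)\leq 1-s\ \text{or}\ \nu(x_{ij}-\beta_1,r+\varepsilon)\geq s\}$ and $B=\{(i,j)\in\mathbb{N}\times\mathbb{N}:\mu(x_{ij}-\beta_2,r+\varepsilon)\leq 1-s\ \text{or}\ \nu(x_{ij}-\beta_2,r+\varepsilon)\geq s\}$ both belong to $\mathcal{I}_2$, hence so does $A\cup B$, and therefore $A^c\cap B^c=(A\cup B)^c\in\mathcal{F}(\mathcal{I}_2)$. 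Since $\emptyset\notin\mathcal{F}(\mathcal{I}_2)$, the set $A^c\cap B^c$ is non-empty, so I can select $(p,q)\in A^c\cap B^c$; for this pair $\mu(x_{pq}-\beta_1,r+\varepsilon)>1-s$, $\mu(x_{pq}-\beta_2,r+\varepsilon)>1-s$, $\nu(x_{pq}-\beta_1,r+\varepsilon)<s$ and $\nu(x_{pq}-\beta_2,r+\varepsilon)<s$.

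Then comes the triangle-type estimate. Writing $\beta_1-\beta_2=(\beta_1-x_{pq})+(x_{pq}-\beta_2)$ and using axiom (5) for $\mu$, axiom (11) for $\nu$, together with $\mu(-z,t)=\mu(z,t)$ and $\nu(-z,t)=\nu(z,t)$ (the case $\alpha=-1$ of axioms (4) and (10)), one obtains $\mu(\beta_1-\beta_2,2(r+\varepsilon))\geq\mu(x_{pq}-\beta_1,r+\varepsilon)\star\mu(x_{pq}-\beta_2,r+\varepsilon)>(1-s)\star(1-s)>1-\lambda$ and $\nu(\beta_1-\beta_2,2(r+\varepsilon))\leq\nu(x_{pq}-\beta_1,r+\varepsilon)\circ\nu(x_{pq}-\beta_2,r+\varepsilon)<s\circ s<\lambda$. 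Finally, since $\mu(z,\cdot)$ is non-decreasing and $\nu(z,\cdot)$ is non-increasing in the time variable — because $\mu(z,t)=\mu(z,t)\star\mu(\theta,s')\leq\mu(z,t+s')$ for all $s'>0$, where $\theta$ is the zero element of $X$, and dually for $\nu$ — and $2(r+\varepsilon)\leq mr$, we conclude $\mu(\beta_1-\beta_2,mr)\geq\mu(\beta_1-\beta_2,2(r+\varepsilon))>1-\lambda$ and $\nu(\beta_1-\beta_2,mr)\leq\nu(\beta_1-\beta_2,2(r+\varepsilon))<\lambda$. This contradicts the standing assumption $\mu(\beta_1-\beta_2,mr)\leq 1-\lambda$, $\nu(\beta_1-\beta_2,mr)\geq\lambda$, which finishes the proof.

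The only point I would be careful to spell out is the monotonicity of $\mu$ and $\nu$ in the time variable: this is precisely what converts the bound at level $2(r+\varepsilon)$ into the required bound at level $mr$, and it is exactly the hypothesis $m>2$ that makes $2(r+\varepsilon)<mr$ attainable for small $\varepsilon$. Everything else is the familiar routine — adapt $s$ to $\lambda$ via continuity of $\star,\circ$, extract a witness index from the non-empty filter set $A^c\cap B^c\in\mathcal{F}(\mathcal{I}_2)$, and chain the $t$-norm and $t$-conorm inequalities.
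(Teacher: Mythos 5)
Your proof is correct and follows essentially the same route as the paper's: argue by contradiction, adapt $s$ to $\lambda$ via continuity of $\star$ and $\circ$, extract a common index from $A^c\cap B^c\in\mathcal{F}(\mathcal{I}_2)$, and chain the $t$-norm/$t$-conorm triangle inequalities to bound $\mu(\beta_1-\beta_2,\cdot)$ and $\nu(\beta_1-\beta_2,\cdot)$. The only cosmetic difference is that the paper keeps $\varepsilon>0$ arbitrary and at the end sets $\varepsilon=mr-2r$ so as to land exactly at level $mr$, whereas you fix a smaller $\varepsilon$ up front and then invoke monotonicity of $\mu(z,\cdot)$ and $\nu(z,\cdot)$ in the $t$-variable (correctly justified via $\mu(\theta,s')=1$, $\nu(\theta,s')=0$) to pass from $2(r+\varepsilon)$ to $mr$ --- a step the paper's choice of $\varepsilon$ renders unnecessary.
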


\begin{proof}
    We prove it by contradiction. If possible, let there exists $\beta_1,\beta_2\in \mathcal{I}_2^{(\mu,\nu)}\text{-}LIM_{x_{mn}}^r$ such that $\mu(\beta_1-\beta_2,mr)\leq 1-\lambda$ and $\nu(\beta_1-\beta_2,mr)\geq \lambda$ for $m(\in\mathbb{R})>2$. Now, for a given $\lambda\in (0,1)$ choose $s\in (0,1)$ such that $(1-s)\star (1-s)>1-\lambda$ and $s\circ s<\lambda$. Then for every $\varepsilon>0$, the sets $A,B\in\mathcal{I}_2$ where $A=\{(m,n)\in\mathbb{N}\times\mathbb{N}: \mu(x_{mn}-\beta_1,r+\frac{\varepsilon}{2})\leq 1-s \ \text{or}\ \nu(x_{mn}-\beta_1,r+\frac{\varepsilon}{2})\geq s \}\in\mathcal{I}_2 $ and $B=\{(m,n)\in\mathbb{N}\times\mathbb{N}: \mu(x_{mn}-\beta_2,r+\frac{\varepsilon}{2})\leq 1-s \ \text{or}\ \nu(x_{mn}-\beta_2,r+\frac{\varepsilon}{2})\geq s \}\in\mathcal{I}_2 $. Then $A^c\cap B^c\in \mathcal{F(I_\text{2})}$. Now for $(m,n)\in A^c\cap B^c$, we have $\mu(\beta_1-\beta_2,2r+\varepsilon)\geq \mu (x_{mn}-\beta_1,r+\frac{\varepsilon}{2})\star \mu(x_{mn}-\beta_2,r+\frac{\varepsilon}{2})>(1-s)\star (1-s)>1-\lambda$ and $\nu(\beta_1-\beta_2,2r+\varepsilon)\leq \nu(x_{mn}-\beta_1,r+\frac{\varepsilon}{2})\circ \nu(x_{mn}-\beta_2,r+\frac{\varepsilon}{2})<s\circ s<\lambda$. Therefore, \begin{equation}\label{eqn1}
      \mu(\beta_1-\beta_2,2r+\varepsilon)>1-\lambda \ \text{and}\  \nu(\beta_1-\beta_2,2r+\varepsilon)<\lambda 
    \end{equation} Now if we put $\varepsilon=mr-2r, m>2$ in Equation \ref{eqn1} then we have $\mu(\beta_1-\beta_2,mr)>1-\lambda \ \text{and}\  \nu(\beta_1-\beta_2,mr)<\lambda$, which is a contradiction. This completes the proof.
\end{proof}

Now we define $\mathcal{I}_2$-cluster point analogue to Definition \ref{defi2.14}. \"{O}zcan and Or \cite{Ozcan} defined rough statistical cluster point of double sequences in an IFNS and, here, we give its ideal version in the same sapce. Also, we prove an important result analogue to (\cite{Sen}, Theorem 4.7) in the same space which will be useful in the sequal. 
\begin{definition}
    Let $\{x_{mn}\}$ be a double sequence in an IFNS $(X,\mu,\nu,\star,\circ )$. Then a point $\zeta\in X$ is said to be $\mathcal{I}_2$-cluster point of  $\{x_{mn}\}$ with respect to the intuitionistic fuzzy norm $(\mu,\nu)$ if for every $\varepsilon>0$ and $\lambda\in (0,1)$, $\{(m,n)\in \mathbb{N}\times \mathbb{N}: \mu(x_{mn}-\zeta,\varepsilon)>1-\lambda \ \text{and}\ \nu(x_{mn}-\zeta,\varepsilon)<\lambda\}\notin\mathcal{I}_2$. 
\end{definition}
We denote $\Gamma_{({x_{mn}})}(\mathcal{I}_2^{(\mu,\nu)})$ to mean the set of all $\mathcal{I}_2$-cluster points of $\{x_{mn}\}$ with respect to the intuitionistic fuzzy norm $(\mu,\nu)$ .
\begin{definition}\label{defi3.4}
    Let $\{x_{mn}\}$ be a double sequence in an IFNS $(X,\mu,\nu,\star,\circ )$ and $r\geq 0$. Then a point $\beta\in X$ is said to be rough $\mathcal{I}_2$-cluster point of  $\{x_{mn}\}$ with respect to the intuitionistic fuzzy norm $(\mu,\nu)$ if for every $\varepsilon>0$ and $\lambda\in (0,1)$, $\{(m,n)\in \mathbb{N}\times \mathbb{N}: \mu(x_{mn}-\beta,r+\varepsilon)>1-\lambda \ \text{and}\ \nu(x_{mn}-\beta,r+\varepsilon)<\lambda\}\notin\mathcal{I}_2$. 
\end{definition}
We denote $\Gamma_{({x_{mn}})}^r(\mathcal{I}_2^{(\mu,\nu)})$ to mean the set of all rough $\mathcal{I}_2$-cluster points of $\{x_{mn}\}$ with respect to the intuitionistic fuzzy norm $(\mu,\nu)$ .

\begin{remark}
    Now if we put $r=0$ in Definition \ref{defi3.4}, then $\Gamma_{({x_{mn}})}^r(\mathcal{I}_2^{(\mu,\nu)})=\Gamma_{({x_{mn}})}(\mathcal{I}_2^{(\mu,\nu)})$.
\end{remark}

\begin{theorem}\label{thm3.7}
    Let $\{x_{mn}\}$ be a double sequence in an IFNS $(X,\mu,\nu,\star,\circ )$ such that $x_{mn}\xrightarrow{\mathcal{I}_2^{(\mu,\nu)}}L$. Then $\Gamma_{({x_{mn}})}(\mathcal{I}_2^{(\mu,\nu)})=\{L\}$.
\end{theorem}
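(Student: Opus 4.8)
The plan is to prove the two inclusions $\{L\}\subseteq\Gamma_{(x_{mn})}(\mathcal{I}_2^{(\mu,\nu)})$ and $\Gamma_{(x_{mn})}(\mathcal{I}_2^{(\mu,\nu)})\subseteq\{L\}$ separately, the first being an immediate unwinding of the definitions and the second an argument by contradiction.

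For the inclusion $L\in\Gamma_{(x_{mn})}(\mathcal{I}_2^{(\mu,\nu)})$, I would fix $\varepsilon>0$ and $\lambda\in(0,1)$ and use that $x_{mn}\xrightarrow{\mathcal{I}_2^{(\mu,\nu)}}L$ to get $A=\{(m,n)\in\mathbb{N}\times\mathbb{N}:\mu(x_{mn}-L,\varepsilon)\le 1-\lambda\ \text{or}\ \nu(x_{mn}-L,\varepsilon)\ge\lambda\}\in\mathcal{I}_2$. Its complement is exactly $\{(m,n):\mu(x_{mn}-L,\varepsilon)>1-\lambda\ \text{and}\ \nu(x_{mn}-L,\varepsilon)<\lambda\}$, which therefore lies in the associated filter $\mathcal{F}(\mathcal{I}_2)$; since $\mathcal{I}_2$ is non-trivial, a member of $\mathcal{F}(\mathcal{I}_2)$ cannot belong to $\mathcal{I}_2$ (otherwise $\mathbb{N}\times\mathbb{N}\in\mathcal{I}_2$). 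Hence that complement is not in $\mathcal{I}_2$, which is precisely the defining condition for $L$ to be an $\mathcal{I}_2$-cluster point of $\{x_{mn}\}$.

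For the reverse inclusion I would suppose, for contradiction, that $\zeta\in\Gamma_{(x_{mn})}(\mathcal{I}_2^{(\mu,\nu)})$ with $\zeta\ne L$, so $\zeta-L\ne\theta$ and, by axioms (3) and (9) of an IFNS, $\mu(\zeta-L,t)<1$ and $\nu(\zeta-L,t)>0$ for every $t>0$. I would fix any $\varepsilon>0$, then pick $\lambda_0\in(0,1)$ small enough that $1-\lambda_0>\mu(\zeta-L,2\varepsilon)$ and $\lambda_0<\nu(\zeta-L,2\varepsilon)$, and then, by continuity of $\star$ at $(1,1)$ and of $\circ$ at $(0,0)$, choose $s\in(0,1)$ with $(1-s)\star(1-s)>1-\lambda_0$ and $s\circ s<\lambda_0$. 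By $\mathcal{I}_2$-convergence the set $A=\{(m,n):\mu(x_{mn}-L,\varepsilon)\le 1-s\ \text{or}\ \nu(x_{mn}-L,\varepsilon)\ge s\}$ lies in $\mathcal{I}_2$, whereas $\zeta$ being an $\mathcal{I}_2$-cluster point gives $B=\{(m,n):\mu(x_{mn}-\zeta,\varepsilon)>1-s\ \text{and}\ \nu(x_{mn}-\zeta,\varepsilon)<s\}\notin\mathcal{I}_2$. Since $A\in\mathcal{I}_2$, the set $B\cap A^{c}$ is not in $\mathcal{I}_2$ and in particular is non-empty; picking $(p,q)\in B\cap A^{c}$, writing $\zeta-L=(\zeta-x_{pq})+(x_{pq}-L)$, and applying axioms (4), (5), (10), (11) together with the monotonicity of $\star$ and $\circ$, I obtain $\mu(\zeta-L,2\varepsilon)\ge\mu(x_{pq}-\zeta,\varepsilon)\star\mu(x_{pq}-L,\varepsilon)\ge(1-s)\star(1-s)>1-\lambda_0$ and $\nu(\zeta-L,2\varepsilon)\le\nu(x_{pq}-\zeta,\varepsilon)\circ\nu(x_{pq}-L,\varepsilon)\le s\circ s<\lambda_0$, contradicting the choice of $\lambda_0$. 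Hence no point other than $L$ can be an $\mathcal{I}_2$-cluster point, and the theorem follows.

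The main obstacle is bookkeeping rather than conceptual: one must split $\zeta-L$ so that the two radii add up to exactly $2\varepsilon$, and keep straight which direction each inequality runs, in particular that axioms (5)/(11) are ``reversed'' inequalities and that $\star,\circ$ are only monotone (so the strict inequalities must come from the $(1-s)\star(1-s)>1-\lambda_0$ and $s\circ s<\lambda_0$ side). Two small points to get right are that non-triviality of $\mathcal{I}_2$ is what powers the first inclusion (a filter set is never in the ideal), and that $\lambda_0$ must be chosen only after $\varepsilon$ is fixed, which is exactly where axioms (3) and (9) — giving $\mu(\zeta-L,2\varepsilon)<1$ and $\nu(\zeta-L,2\varepsilon)>0$ — are needed.
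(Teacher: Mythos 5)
Your proposal is correct and its core argument is essentially the paper's: both establish uniqueness by contradiction, using the fact that an index set witnessing closeness to a putative second cluster point $\zeta$ cannot be absorbed into the ideal-small set where $x_{mn}$ is far from $L$, and then applying the $\star$/$\circ$ triangle inequalities at a common index $(p,q)$ to force $\mu(\zeta-L,2\varepsilon)$ and $\nu(\zeta-L,2\varepsilon)$ into a contradiction. The only substantive difference is that you also prove the inclusion $\{L\}\subseteq\Gamma_{(x_{mn})}(\mathcal{I}_2^{(\mu,\nu)})$ via the filter/non-triviality argument, a step the paper's proof leaves implicit, and you fix the threshold $\lambda_0$ in advance using $\mu(\zeta-L,2\varepsilon)<1$ and $\nu(\zeta-L,2\varepsilon)>0$ rather than letting $\lambda$ be arbitrary as the paper does; both mechanisms are sound.
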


\begin{proof}
    If possible let $\Gamma_{({x_{mn}})}(\mathcal{I}_2^{(\mu,\nu)})=\{L, \mathcal{J}\}$, where $L\neq \mathcal{J}$. For a given $\lambda\in (0,1)$, choose $s\in (0,1)$ such that $(1-s)\star (1-s)>1-\lambda$ and $(s)\circ (s)<\lambda$. Then for every $\varepsilon>0$,  $K_1= \{(m,n)\in \mathbb{N}\times \mathbb{N}: \mu(x_{mn}-L,\frac{\varepsilon}{2})>1-s \ \text{and}\ \nu(x_{mn}-L,\frac{\varepsilon}{2})<s\}\notin\mathcal{I}_2$ and $K_2=\{(m,n)\in \mathbb{N}\times \mathbb{N}: \mu(x_{mn}-\mathcal{J},\frac{\varepsilon}{2})>1-s \ \text{and}\ \nu(x_{mn}-\mathcal{J},\frac{\varepsilon}{2})<s\}\notin\mathcal{I}_2$. Clearly $K_1\cap K_2=\emptyset$, If not, let $(i,j)\in K_1\cap K_2$. Then $\mu (L-\mathcal{J},\varepsilon)\geq \mu(x_{ij}-L,\frac{\varepsilon}{2})\star \mu(x_{ij}-\mathcal{J},\frac{\varepsilon}{2})>(1-s)\star (1-s)>1-\lambda$ and $\nu(L-\mathcal{J},\varepsilon)\leq \nu(x_{ij}-L,\frac{\varepsilon}{2})\circ \nu(x_{ij}-\mathcal{J},\frac{\varepsilon}{2})<s\circ s<\lambda$. Since $\lambda\in (0,1)$ is arbitrary, $\mu (L-\mathcal{J},\varepsilon)=1$, which gives $L=\mathcal{J}$ and $\nu(L-\mathcal{J},\varepsilon)=0$, which gives $L=\mathcal{J}$ for all $\varepsilon>0$. This yields to a contradiction. Therefore $K_2\subset K_1^c$. Since $x_{mn}\xrightarrow{\mathcal{I}_2^{(\mu,\nu)}}L$, then $K_1^c=\{(m,n)\in\mathbb{N}\times \mathbb{N}: \mu(x_{mn}-L,\frac{\varepsilon}{2})\leq 1-s\ \text{or}\ \nu(x_{mn}-L,\frac{\varepsilon}{2})\geq s \}\in\mathcal{I}_2$. Hence $K_2\in \mathcal{I}_2$, which contradicts $K_2\not\in\mathcal{I}_2$. Therefore, $\Gamma_{({x_{mn}})}(\mathcal{I}_2^{(\mu,\nu)})=\{L\}$. This completes the proof.
\end{proof}

\begin{theorem}
     Let $\{x_{mn}\}$ be a double sequence in an IFNS $(X,\mu,\nu,\star,\circ )$. Then, for all $r>0$ the set $\Gamma_{({x_{mn}})}^r(\mathcal{I}_2^{(\mu,\nu)})$ is closed with respect to the intuitionistic fuzzy norm $(\mu,\nu)$.
\end{theorem}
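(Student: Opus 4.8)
The plan is to show the complement of $\Gamma_{({x_{mn}})}^r(\mathcal{I}_2^{(\mu,\nu)})$ is open, equivalently that $\Gamma_{({x_{mn}})}^r(\mathcal{I}_2^{(\mu,\nu)})$ contains all its limit points with respect to the intuitionistic fuzzy norm $(\mu,\nu)$. So I would start by taking a double sequence $\{z_{mn}\}$ lying in $\Gamma_{({x_{mn}})}^r(\mathcal{I}_2^{(\mu,\nu)})$ with $z_{mn}\xrightarrow{(\mu,\nu)}\beta_0$, and aim to prove $\beta_0\in\Gamma_{({x_{mn}})}^r(\mathcal{I}_2^{(\mu,\nu)})$, i.e. that for arbitrary $\varepsilon>0$ and $\lambda\in(0,1)$ the set $\{(m,n):\mu(x_{mn}-\beta_0,r+\varepsilon)>1-\lambda \text{ and } \nu(x_{mn}-\beta_0,r+\varepsilon)<\lambda\}\notin\mathcal{I}_2$.

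The mechanism is the standard triangle-inequality-with-$t$-norm splitting used already in Theorem \ref{thm3.3}. Given $\lambda$, choose $s\in(0,1)$ with $(1-s)\star(1-s)>1-\lambda$ and $s\circ s<\lambda$ (possible by continuity of $\star,\circ$ together with $1\star 1=1$, $0\circ 0=0$). By convergence of $\{z_{mn}\}$ pick indices beyond which $\mu(z_{ij}-\beta_0,\varepsilon/2)>1-s$ and $\nu(z_{ij}-\beta_0,\varepsilon/2)<s$; fix one such pair $(i,j)$. Since $z_{ij}\in\Gamma_{({x_{mn}})}^r(\mathcal{I}_2^{(\mu,\nu)})$, the set $Q=\{(m,n):\mu(x_{mn}-z_{ij},r+\varepsilon/2)>1-s \text{ and } \nu(x_{mn}-z_{ij},r+\varepsilon/2)<s\}\notin\mathcal{I}_2$. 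For $(p,q)\in Q$ the $t$-norm/$t$-conorm monotonicity gives $\mu(x_{pq}-\beta_0,r+\varepsilon)\geq\mu(x_{pq}-z_{ij},r+\varepsilon/2)\star\mu(z_{ij}-\beta_0,\varepsilon/2)>(1-s)\star(1-s)>1-\lambda$ and likewise $\nu(x_{pq}-\beta_0,r+\varepsilon)\leq\nu(x_{pq}-z_{ij},r+\varepsilon/2)\circ\nu(z_{ij}-\beta_0,\varepsilon/2)<s\circ s<\lambda$. Hence $Q\subset\{(m,n):\mu(x_{mn}-\beta_0,r+\varepsilon)>1-\lambda \text{ and } \nu(x_{mn}-\beta_0,r+\varepsilon)<\lambda\}$; since the subset is not in $\mathcal{I}_2$ and $\mathcal{I}_2$ is closed under subsets, the superset is not in $\mathcal{I}_2$ either. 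As $\varepsilon,\lambda$ were arbitrary, $\beta_0\in\Gamma_{({x_{mn}})}^r(\mathcal{I}_2^{(\mu,\nu)})$, so the set is closed.

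I do not anticipate a genuine obstacle; the only points needing care are the direction of the containment (here the set built from the cluster point $z_{ij}$ must sit \emph{inside} the set built from $\beta_0$, which is why the $\notin\mathcal{I}_2$ property propagates upward, the reverse of the closed-limit-set argument where one shows a small set contains a large one) and the ``and'' versus ``or'' bookkeeping in the defining conditions — the cluster-point sets use conjunctions, so their complements use disjunctions, and one must split on whichever of $\mu$ or $\nu$ fails. Handling the degenerate case $\Gamma_{({x_{mn}})}^r(\mathcal{I}_2^{(\mu,\nu)})=\emptyset$ at the outset (the empty set is trivially closed) completes the proof.
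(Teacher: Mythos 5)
Your proposal is correct and is exactly the argument the paper intends: the paper's own ``proof'' merely says the details are almost the same as in Theorem \ref{thm3.3}, and you have supplied precisely that adaptation. You also correctly flag the one genuine point of difference from Theorem \ref{thm3.3} --- for cluster points the set $Q$ built from $z_{ij}$ (known to be \emph{outside} $\mathcal{I}_2$) must be shown to sit \emph{inside} the corresponding set for $\beta_0$, so that the $\notin\mathcal{I}_2$ property propagates to the superset via downward closedness of the ideal, which is the reverse of the containment used for the limit set.
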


\begin{proof}
    The proof is almost similar to the proof of Theorem \ref{thm3.3}. So we omit details.
\end{proof}

\begin{theorem}\label{thm3.9}
    Let $\{x_{mn}\}$ be a double sequence in an IFNS $(X,\mu,\nu,\star,\circ )$. Then for an arbitrary $x_1\in \Gamma_{({x_{mn}})}(\mathcal{I}_2^{(\mu,\nu)})$ and $\lambda\in (0,1)$ we have $\mu(x_2-x_1,r)>1-\lambda$ and $\nu(x_2-x_1,r)<\lambda$ for all $x_2\in \Gamma_{({x_{mn}})}^r(\mathcal{I}_2^{(\mu,\nu)})$.
\end{theorem}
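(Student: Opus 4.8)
The plan is to argue by contradiction, in the spirit of Theorem \ref{thm3.7} and of the earlier theorem bounding the separation of two rough $\mathcal{I}_2^{(\mu,\nu)}$-limits of $\{x_{mn}\}$. Suppose the conclusion fails: there are $x_1\in\Gamma_{(x_{mn})}(\mathcal{I}_2^{(\mu,\nu)})$, $x_2\in\Gamma_{(x_{mn})}^{r}(\mathcal{I}_2^{(\mu,\nu)})$ and some $\lambda\in(0,1)$ with $\mu(x_2-x_1,r)\le 1-\lambda$ or $\nu(x_2-x_1,r)\ge\lambda$. For this $\lambda$ I would first choose $s\in(0,1)$ with $(1-s)\star(1-s)>1-\lambda$ and $s\circ s<\lambda$; such an $s$ exists by continuity and monotonicity of $\star$ and $\circ$, and this is the only place those axioms enter.

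Fix an arbitrary $\varepsilon>0$ and record the two index sets the hypotheses provide. Because $x_1$ is an $\mathcal{I}_2$-cluster point, $K_1(\varepsilon):=\{(m,n)\in\mathbb{N}\times\mathbb{N}:\ \mu(x_{mn}-x_1,\varepsilon/2)>1-s\ \text{and}\ \nu(x_{mn}-x_1,\varepsilon/2)<s\}\notin\mathcal{I}_2$, and because $x_2$ is a rough $\mathcal{I}_2$-cluster point of roughness $r$ (Definition \ref{defi3.4}), $K_2(\varepsilon):=\{(m,n)\in\mathbb{N}\times\mathbb{N}:\ \mu(x_{mn}-x_2,r+\varepsilon/2)>1-s\ \text{and}\ \nu(x_{mn}-x_2,r+\varepsilon/2)<s\}\notin\mathcal{I}_2$. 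The main computation is the triangle inequality for the intuitionistic fuzzy norm: for any index $(i,j)\in K_1(\varepsilon)\cap K_2(\varepsilon)$,
$$\mu(x_2-x_1,r+\varepsilon)\ \ge\ \mu(x_2-x_{ij},r+\varepsilon/2)\star\mu(x_{ij}-x_1,\varepsilon/2)\ >\ (1-s)\star(1-s)\ >\ 1-\lambda,$$
$$\nu(x_2-x_1,r+\varepsilon)\ \le\ \nu(x_2-x_{ij},r+\varepsilon/2)\circ\nu(x_{ij}-x_1,\varepsilon/2)\ <\ s\circ s\ <\ \lambda.$$
Since $\varepsilon>0$ is arbitrary and $t\mapsto\mu(x_2-x_1,t)$, $t\mapsto\nu(x_2-x_1,t)$ are continuous, letting $\varepsilon\to0^{+}$ gives $\mu(x_2-x_1,r)\ge 1-\lambda$ and $\nu(x_2-x_1,r)\le\lambda$; rerunning the argument with a fixed $\lambda'\in(0,\lambda)$ (and a matching $s'$) in place of $\lambda$ upgrades these to the strict inequalities $\mu(x_2-x_1,r)>1-\lambda$ and $\nu(x_2-x_1,r)<\lambda$, contradicting the assumption and finishing the proof.

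The step I expect to be the real obstacle is the one used silently above: exhibiting an index $(i,j)\in K_1(\varepsilon)\cap K_2(\varepsilon)$. In the theorem bounding the separation of two rough $\mathcal{I}_2^{(\mu,\nu)}$-limits this was automatic, since there the two ``bad'' sets lay in $\mathcal{I}_2$ (the points being limits, not merely cluster points), so their complements lay in $\mathcal{F}(\mathcal{I}_2)$ and hence met; here we know only that $K_1(\varepsilon),K_2(\varepsilon)\notin\mathcal{I}_2$, and two sets outside a non-maximal ideal need not meet. One natural route is to turn the inequality around: from the assumed bound on $\mu(x_2-x_1,r)$ or $\nu(x_2-x_1,r)$ one shows that, for $\varepsilon$ small, no $(i,j)\in K_2(\varepsilon)$ can also lie in $K_1(\varepsilon)$, i.e.\ $K_2(\varepsilon)\subseteq K_1(\varepsilon)^{c}$, and then one must argue that this containment, together with $K_1(\varepsilon)\notin\mathcal{I}_2$ and the admissibility of $\mathcal{I}_2$, forces $K_2(\varepsilon)\in\mathcal{I}_2$ --- contradicting $x_2\in\Gamma_{(x_{mn})}^{r}(\mathcal{I}_2^{(\mu,\nu)})$. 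Making that last implication precise is, to my mind, the crux of the theorem, and the proof should be organised so that the cluster-point hypothesis on $x_1$ is exploited exactly there.
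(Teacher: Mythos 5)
Your instinct that the argument stalls at producing an index in $K_1(\varepsilon)\cap K_2(\varepsilon)$ is exactly right, but the repair you sketch at the end does not exist: the implication ``$K_2(\varepsilon)\subseteq K_1(\varepsilon)^{c}$ and $K_1(\varepsilon)\notin\mathcal{I}_2$ force $K_2(\varepsilon)\in\mathcal{I}_2$'' holds only for maximal ideals, and $\mathcal{I}_2$ is merely assumed admissible. For the density ideal $\mathcal{I}_2^{\delta}$, the sets $\{(m,n): m+n \ \text{even}\}$ and $\{(m,n): m+n \ \text{odd}\}$ are disjoint and both lie outside the ideal. In fact the statement you are trying to prove is false as literally written: take $x_{mn}=0$ when $m+n$ is even and $x_{mn}=c\neq 0$ when $m+n$ is odd, in the standard IFNS on $\mathbb{R}$ with $\mu(x,t)=t/(t+|x|)$. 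Then both $0$ and $c$ belong to $\Gamma_{({x_{mn}})}(\mathcal{I}_2^{(\mu,\nu)})$ and to $\Gamma_{({x_{mn}})}^{r}(\mathcal{I}_2^{(\mu,\nu)})$, yet $\mu(c-0,r)=r/(r+|c|)$ fails to exceed $1-\lambda$ once $|c|$ is large. So no proof of the literal statement can succeed, and your honest flagging of the gap is the correct diagnosis rather than a fixable omission.

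The proof given in the paper is not a proof of this statement either: it is a direct (not contradiction) argument for the \emph{converse} implication. It tacitly assumes $\mu(x_1-x_2,r)>1-s$ and $\nu(x_1-x_2,r)<s$ (the purported conclusion, used in the step $\mu(x_{ij}-x_1,\varepsilon)\star\mu(x_1-x_2,r)>(1-s)\star(1-s)$), takes the single witness set $\{(m,n):\mu(x_{mn}-x_1,\varepsilon)>1-s \ \text{and}\ \nu(x_{mn}-x_1,\varepsilon)<s\}\notin\mathcal{I}_2$ supplied by $x_1\in\Gamma_{({x_{mn}})}(\mathcal{I}_2^{(\mu,\nu)})$, and shows by the triangle inequality that this set is contained in the corresponding witness set for $x_2$ at level $r+\varepsilon$, which is therefore also outside $\mathcal{I}_2$; the conclusion it reaches is $x_2\in\Gamma_{({x_{mn}})}^{r}(\mathcal{I}_2^{(\mu,\nu)})$, i.e.\ the purported hypothesis. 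That converse statement --- if $x_1$ is an $\mathcal{I}_2$-cluster point and $x_2\in\overline{B(x_1,\lambda,r)}$, then $x_2$ is a rough $\mathcal{I}_2$-cluster point --- is true and is what is actually needed for the inclusion $\bigcup_{x_0\in \Gamma_{({x_{mn}})}(\mathcal{I}_2^{(\mu,\nu)})}\overline{B(x_0,\lambda,r)}\subseteq\Gamma_{({x_{mn}})}^{r}(\mathcal{I}_2^{(\mu,\nu)})$ in Theorem \ref{thm3.10}; note, however, that Theorem \ref{thm3.10} also invokes Theorem \ref{thm3.9} in the literal (false) direction, so the issue propagates. In short: your attempt and the paper's proof address two different implications, only one of which is provable, and it is not the one in the statement.
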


\begin{proof}
    For given $\lambda\in (0,1)$, choose $s\in (0,1)$ such that $(1-s)\star (1-s)>1-\lambda$ and $s\circ s<\lambda$. Since $x_1\in \Gamma_{({x_{mn}})}(\mathcal{I}_2^{(\mu,\nu)})$, for every $\varepsilon>0$, we get \begin{equation}\label{eqn3.2}
       \{(m,n)\in \mathbb{N}\times \mathbb{N}: \mu(x_{mn}-x_1,\varepsilon)>1-s \ \text{and}\ \nu(x_{mn}-x_1,\varepsilon)<s\}\notin\mathcal{I}_2. 
    \end{equation}
    Now, let $(i,j)\in \{(m,n)\in \mathbb{N}\times \mathbb{N}: \mu(x_{mn}-x_1,\varepsilon)>1-s \ \text{and}\ \nu(x_{mn}-x_1,\varepsilon)<s\}$. Then we have $\mu(x_{ij}-x_2,r+\varepsilon)\geq \mu(x_{ij}-x_1,\varepsilon)\star \mu(x_1-x_2,r)>(1-s)\star (1-s)>1-\lambda$ and $\nu(x_{ij}-x_2,r+\varepsilon)\leq \nu(x_{ij}-x_1,\varepsilon)\circ\nu(x_2-x_1,r)<s\circ s<\lambda$. Therefore $\{(m,n)\in \mathbb{N}\times \mathbb{N}: \mu(x_{mn}-x_1,\varepsilon)>1-s \ \text{and}\ \nu(x_{mn}-x_1,\varepsilon)<s\}\subset \{(m,n)\in \mathbb{N}\times \mathbb{N}: \mu(x_{mn}-x_2,r+\varepsilon)>1-s \ \text{and}\ \nu(x_{mn}-x_2,r+\varepsilon)<s\}$. So, from Equation \ref{eqn3.2}, we get $\{(m,n)\in \mathbb{N}\times \mathbb{N}: \mu(x_{mn}-x_2,r+\varepsilon)>1-s \ \text{and}\ \nu(x_{mn}-x_2,r+\varepsilon)<s\}\notin\mathcal{I}_2$. Hence $x_2\in \Gamma_{({x_{mn}})}^r(\mathcal{I}_2^{(\mu,\nu)})$. This completes the proof.
\end{proof}

\begin{theorem}\label{thm3.10}
    Let $\{x_{mn}\}$ be a double sequence in an IFNS $(X,\mu,\nu,\star,\circ )$. Then for some $r>0$, $\lambda\in(0,1)$ and fixed $x_0\in X$ we have $$\Gamma_{({x_{mn}})}^r(\mathcal{I}_2^{(\mu,\nu)})=\bigcup_{x_0\in \Gamma_{({x_{mn}})}(\mathcal{I}_2^{(\mu,\nu)})}\overline{B(x_0,\lambda,r)}.$$
\end{theorem}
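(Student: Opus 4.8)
The plan is to prove the stated set identity by the usual two inclusions, drawing on Theorem~\ref{thm3.9} for the inclusion ``$\subseteq$'' and on the triangle-type axioms (5) and (11) of the intuitionistic fuzzy norm, together with the continuity of $\star$ and $\circ$, for the inclusion ``$\supseteq$''. Throughout I would also use the monotonicity of $\mu(x,\cdot)$ (non-decreasing) and $\nu(x,\cdot)$ (non-increasing), obtained by setting one vector argument equal to $\theta$ in (5) and (11), and the symmetry $\mu(-x,t)=\mu(x,t)$, $\nu(-x,t)=\nu(x,t)$, obtained by putting $\alpha=-1$ in (4) and (10).

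For ``$\subseteq$'': assuming $\Gamma_{({x_{mn}})}(\mathcal{I}_2^{(\mu,\nu)})\neq\emptyset$, fix some $x_0\in\Gamma_{({x_{mn}})}(\mathcal{I}_2^{(\mu,\nu)})$ and take an arbitrary $\beta\in\Gamma_{({x_{mn}})}^r(\mathcal{I}_2^{(\mu,\nu)})$. Theorem~\ref{thm3.9} applied with $x_1=x_0$ and $x_2=\beta$ then delivers $\mu(\beta-x_0,r)>1-\lambda$ and $\nu(\beta-x_0,r)<\lambda$; by the symmetry of $\mu$ and $\nu$ in the vector slot this is exactly $\mu(x_0-\beta,r)\geq 1-\lambda$ and $\nu(x_0-\beta,r)\leq\lambda$, i.e.\ $\beta\in\overline{B(x_0,\lambda,r)}$, so $\beta$ lies in the right-hand union.

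For ``$\supseteq$'': take $z\in\overline{B(x_0,\lambda,r)}$ with $x_0\in\Gamma_{({x_{mn}})}(\mathcal{I}_2^{(\mu,\nu)})$, so $\mu(x_0-z,r)\geq 1-\lambda$ and $\nu(x_0-z,r)\leq\lambda$, and aim to show $z\in\Gamma_{({x_{mn}})}^r(\mathcal{I}_2^{(\mu,\nu)})$. Fix $\varepsilon>0$ and choose, by continuity of $\star$ and $\circ$, a number $s\in(0,1)$ with $(1-s)\star(1-s)>1-\lambda$ and $s\circ s<\lambda$. Since $x_0$ is an $\mathcal{I}_2$-cluster point, the set $E=\{(m,n)\in\mathbb{N}\times\mathbb{N}:\mu(x_{mn}-x_0,\frac{\varepsilon}{2})>1-s\ \text{and}\ \nu(x_{mn}-x_0,\frac{\varepsilon}{2})<s\}$ is not in $\mathcal{I}_2$. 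For $(i,j)\in E$ the inequalities (5) and (11) give $\mu(x_{ij}-z,r+\varepsilon)\geq\mu(x_{ij}-x_0,\frac{\varepsilon}{2})\star\mu(x_0-z,r+\frac{\varepsilon}{2})$ and $\nu(x_{ij}-z,r+\varepsilon)\leq\nu(x_{ij}-x_0,\frac{\varepsilon}{2})\circ\nu(x_0-z,r+\frac{\varepsilon}{2})$; feeding in the bounds on the two factors and using monotonicity of $\star,\circ$ places $(i,j)$ in $\{(m,n):\mu(x_{mn}-z,r+\varepsilon)>1-\lambda\ \text{and}\ \nu(x_{mn}-z,r+\varepsilon)<\lambda\}$, so that set contains $E\notin\mathcal{I}_2$ and is therefore not in $\mathcal{I}_2$, giving $z\in\Gamma_{({x_{mn}})}^r(\mathcal{I}_2^{(\mu,\nu)})$.

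The step I expect to require the most care is the ``feeding in the bounds'' passage in the ``$\supseteq$'' part: the closed-ball hypothesis only supplies the non-strict inequalities $\mu(x_0-z,r)\geq 1-\lambda$ and $\nu(x_0-z,r)\leq\lambda$, whereas the conclusion wants strict inequalities, so one must spend the slack hidden in the radius --- using $\mu(x_0-z,r+\frac{\varepsilon}{2})\geq\mu(x_0-z,r)$ and the room available in how close $\mu(x_{ij}-x_0,\frac{\varepsilon}{2})$ is forced to $1$ --- and then check that a single $s$ (equivalently, a single ball) does the job uniformly in $\varepsilon$; the $\nu$-side estimate with the $t$-conorm is symmetric. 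The other point worth isolating is the degenerate case $\Gamma_{({x_{mn}})}(\mathcal{I}_2^{(\mu,\nu)})=\emptyset$: here one should argue separately that $\Gamma_{({x_{mn}})}^r(\mathcal{I}_2^{(\mu,\nu)})=\emptyset$ as well, so that both sides of the asserted identity are empty.
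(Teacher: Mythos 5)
Your proposal follows essentially the same route as the paper: the inclusion $\Gamma_{(x_{mn})}^r(\mathcal{I}_2^{(\mu,\nu)})\subseteq\bigcup\overline{B(x_0,\lambda,r)}$ via Theorem~\ref{thm3.9} (the paper phrases this as a contradiction, you phrase it directly), and the reverse inclusion by routing the triangle-type axioms (5) and (11) through the cluster-point set $E\notin\mathcal{I}_2$ with an auxiliary $s$ satisfying $(1-s)\star(1-s)>1-\lambda$ and $s\circ s<\lambda$. The two delicate points you honestly flag are real but are not resolved in the paper either: the paper simply asserts $\mu(x_0-y_0,r)>1-s$ and $\nu(x_0-y_0,r)<s$ from membership in the closed ball (which only yields the non-strict bounds $\geq 1-\lambda$ and $\leq\lambda$, and since $s<\lambda$ this does not follow), and your proposed repair of spending slack in the radius still only gives $(1-s)\star(1-\lambda)$, which need not exceed $1-\lambda$ for, e.g., the product $t$-norm; likewise the degenerate case $\Gamma_{(x_{mn})}(\mathcal{I}_2^{(\mu,\nu)})=\emptyset$ is ignored by the paper. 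So you have reproduced the paper's argument together with an accurate diagnosis of where it is thin.
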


\begin{proof}
    For a  given $\lambda\in (0,1)$, choose $s\in (0,1)$ such that $(1-s)\star (1-s)>1-\lambda$ and $s\circ s<\lambda$. Let $y_0\in \bigcup_{x_0\in \Gamma_{({x_{mn}})}(\mathcal{I}_2^{(\mu,\nu)})}\overline{B(x_0,\lambda,r)}$. Then there is $x_0\in \Gamma_{({x_{mn}})}(\mathcal{I}_2^{(\mu,\nu)})$ such that $\mu(x_0-y_0,r)>1-s$ and $\nu(x_0-y_0,r)<s$. Now, since $x_0\in \Gamma_{({x_{mn}})}(\mathcal{I}_2^{(\mu,\nu)})$, for every $\varepsilon>0$ there exists a set $M=\{(m,n)\in\mathbb{N}\times\mathbb{N}: \mu(x_{mn}-x_0,\varepsilon)>1-s \ \text{and}\ \nu(x_{mn}-x_0,\varepsilon)<s \}$ with $M\notin\mathcal{I}_2$. Let $(i,j)\in M$. Now we have $\mu(x_{ij}-y_0,r+\varepsilon)\geq \mu(x_{ij}-x_0,\varepsilon)\star\mu(x_0-y_0,r)>(1-s)\star (1-s)>1-\lambda$ and $\nu(x_{ij}-y_0,r+\varepsilon)\leq \nu(x_{ij}-x_0,\varepsilon)\circ\nu(x_0-y_0,r)<s\circ s<\lambda$. Therefore $M\subset \{(i,j)\in\mathbb{N}\times \mathbb{N}: \mu(x_{ij}-y_0,r+\varepsilon)>1-\lambda \ \text{and}\  \nu(x_{ij}-y_0,r+\varepsilon)<\lambda\}$. Since $M\notin \mathcal{I}_2$, $\{(i,j)\in\mathbb{N}\times \mathbb{N}: \mu(x_{ij}-y_0,r+\varepsilon)>1-\lambda \ \text{and}\  \nu(x_{ij}-y_0,r+\varepsilon)<\lambda\}\notin\mathcal{I}_2$. Hence $y_0\in \Gamma_{({x_{mn}})}^r(\mathcal{I}_2^{(\mu,\nu)})$. Therefore $\bigcup_{x_0\in \Gamma_{({x_{mn}})}(\mathcal{I}_2^{(\mu,\nu)})}\overline{B(x_0,\lambda,r)}\subseteq\Gamma_{({x_{mn}})}^r(\mathcal{I}_2^{(\mu,\nu)}) $.

   Conversely, suppose that $y_*\in \Gamma_{({x_{mn}})}^r(\mathcal{I}_2^{(\mu,\nu)})$. We shall show that $y_*\in \bigcup_{x_0\in \Gamma_{({x_{mn}})}(\mathcal{I}_2^{(\mu,\nu)})}\overline{B(x_0,\lambda,r)}$. If possible, let $y_*\notin \bigcup_{x_0\in \Gamma_{({x_{mn}})}(\mathcal{I}_2^{(\mu,\nu)})}\overline{B(x_0,\lambda,r)}$. So, $\mu(x_0-y_*,r)\leq 1-\lambda$ and $\nu(x_0-y_*,r)\geq \lambda$ for every $x_0\in \Gamma_{({x_{mn}})}(\mathcal{I}_2^{(\mu,\nu)})$. Now, by Theorem \ref{thm3.9}, we have $\mu(x_0-y_*,r)>1-\lambda$ and $\nu(x_0-y_*,r)<\lambda$, which is a contradiction. Therefore, $\Gamma_{({x_{mn}})}^r(\mathcal{I}_2^{(\mu,\nu)})\subseteq \bigcup_{x_0\in \Gamma_{({x_{mn}})}(\mathcal{I}_2^{(\mu,\nu)})}\overline{B(x_0,\lambda,r)}$. Hence $\Gamma_{({x_{mn}})}^r(\mathcal{I}_2^{(\mu,\nu)})=\bigcup_{x_0\in \Gamma_{({x_{mn}})}(\mathcal{I}_2^{(\mu,\nu)})}\overline{B(x_0,\lambda,r)}$. This completes the proof.
\end{proof}

\begin{theorem}\label{thm3.11}
    Let $\{x_{mn}\}$ be a double sequence in an IFNS $(X,\mu,\nu,\star,\circ )$. Then for any $\lambda\in (0,1)$ the following statements hold:
    \begin{enumerate}
        \item If $x_0\in \Gamma_{({x_{mn}})}(\mathcal{I}_2^{(\mu,\nu)})$ then $\mathcal{I}_2^{(\mu,\nu)}\text{-}LIM_{x_{mn}}^r\subseteq \overline{B(x_0,\lambda,r)}$.
        \item $\mathcal{I}_2^{(\mu,\nu)}\text{-}LIM_{x_{mn}}^r=\bigcap_{x_0\in \Gamma_{({x_{mn}})}(\mathcal{I}_2^{(\mu,\nu)})}\overline{B(x_0,\lambda,r)}=\{\eta\in X: \Gamma_{({x_{mn}})}(\mathcal{I}_2^{(\mu,\nu)})\subseteq \overline{B(\eta,\lambda,r)} \}$.
    \end{enumerate}
\end{theorem}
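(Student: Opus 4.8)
My plan is to obtain statement (2) from statement (1) together with the symmetry $\mu(-z,t)=\mu(z,t)$, $\nu(-z,t)=\nu(z,t)$ of the intuitionistic fuzzy norm (axioms (4), (10) with $\alpha=-1$), so that the only genuinely new work sits in one inclusion of (2). For statement (1): fix the given $\lambda$ and, as in the earlier proofs, choose $s\in(0,1)$ with $(1-s)\star(1-s)>1-\lambda$ and $s\circ s<\lambda$. Take $\eta\in\mathcal{I}_2^{(\mu,\nu)}\text{-}LIM_{x_{mn}}^r$ and $x_0\in\Gamma_{(x_{mn})}(\mathcal{I}_2^{(\mu,\nu)})$; I want $\mu(x_0-\eta,r)\ge1-\lambda$ and $\nu(x_0-\eta,r)\le\lambda$. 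For arbitrary $\varepsilon>0$, the rough-limit property of $\eta$ (Definition \ref{defi3.1} with $(\frac{\varepsilon}{2},s)$ in place of $(\varepsilon,\lambda)$) puts $A_\varepsilon=\{(m,n):\mu(x_{mn}-\eta,r+\frac{\varepsilon}{2})\le1-s\text{ or }\nu(x_{mn}-\eta,r+\frac{\varepsilon}{2})\ge s\}$ in $\mathcal{I}_2$, while the cluster-point property of $x_0$ keeps $B_\varepsilon=\{(m,n):\mu(x_{mn}-x_0,\frac{\varepsilon}{2})>1-s\text{ and }\nu(x_{mn}-x_0,\frac{\varepsilon}{2})<s\}$ out of $\mathcal{I}_2$; hence $B_\varepsilon\cap A_\varepsilon^{c}$ is non-empty, and for any $(i,j)$ in it the triangle inequalities (5), (11), read through the above symmetry, give $\mu(x_0-\eta,r+\varepsilon)\ge\mu(x_{ij}-x_0,\frac{\varepsilon}{2})\star\mu(x_{ij}-\eta,r+\frac{\varepsilon}{2})>1-\lambda$ and, dually, $\nu(x_0-\eta,r+\varepsilon)<\lambda$. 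Since $\varepsilon>0$ was arbitrary, letting $\varepsilon\downarrow0$ and using continuity of $\mu(x_0-\eta,\cdot)$, $\nu(x_0-\eta,\cdot)$ in $t$ yields $\mu(x_0-\eta,r)\ge1-\lambda$, $\nu(x_0-\eta,r)\le\lambda$, i.e. $\eta\in\overline{B(x_0,\lambda,r)}$; this is (1).

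For statement (2): by the symmetry above, $\eta\in\overline{B(x_0,\lambda,r)}\iff x_0\in\overline{B(\eta,\lambda,r)}$, and intersecting over all $x_0\in\Gamma_{(x_{mn})}(\mathcal{I}_2^{(\mu,\nu)})$ identifies $\bigcap_{x_0}\overline{B(x_0,\lambda,r)}$ with $\{\eta\in X:\Gamma_{(x_{mn})}(\mathcal{I}_2^{(\mu,\nu)})\subseteq\overline{B(\eta,\lambda,r)}\}$, giving the second equality. The inclusion $\mathcal{I}_2^{(\mu,\nu)}\text{-}LIM_{x_{mn}}^r\subseteq\bigcap_{x_0}\overline{B(x_0,\lambda,r)}$ is immediate from (1). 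For the reverse inclusion I would argue by contradiction: suppose $\eta$ lies in every $\overline{B(x_0,\lambda,r)}$ with $x_0\in\Gamma_{(x_{mn})}(\mathcal{I}_2^{(\mu,\nu)})$ but $\eta\notin\mathcal{I}_2^{(\mu,\nu)}\text{-}LIM_{x_{mn}}^r$. Then there are $\varepsilon_0>0$, $\lambda_0\in(0,1)$ with $C=\{(m,n):\mu(x_{mn}-\eta,r+\varepsilon_0)\le1-\lambda_0\text{ or }\nu(x_{mn}-\eta,r+\varepsilon_0)\ge\lambda_0\}\notin\mathcal{I}_2$; splitting $C$ into its $\mu$-part $C_1$ and its $\nu$-part $C_2$, one of them, say $C_1$ (the other handled symmetrically with $\nu$), is not in $\mathcal{I}_2$. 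The key move is then to extract from $C_1$ a point $x_0\in X$ that is an $\mathcal{I}_2$-cluster point of $\{x_{mn}\}$ ``witnessed inside $C_1$'', i.e. with $\{(m,n)\in C_1:\mu(x_{mn}-x_0,\delta)>1-\sigma\text{ and }\nu(x_{mn}-x_0,\delta)<\sigma\}\notin\mathcal{I}_2$ for every $\delta>0$, $\sigma\in(0,1)$ (so $x_0\in\Gamma_{(x_{mn})}(\mathcal{I}_2^{(\mu,\nu)})$). Choosing $(i,j)$ in this set for small $\delta,\sigma$ --- which gives $\mu(x_{ij}-x_0,\delta)>1-\sigma$ and, from membership in $C_1$, $\mu(x_{ij}-\eta,r+\varepsilon_0)\le1-\lambda_0$ --- the $\star$-triangle inequality bounds $\mu(x_0-\eta,r+\varepsilon_0-\delta)$ above by a quantity tending to $1-\lambda_0$ as $\sigma\downarrow0$; letting also $\delta\downarrow0$ and using continuity and the monotonicity of $\mu(x_0-\eta,\cdot)$ in $t$ (non-decreasing, by axioms (3) and (5)) gives $\mu(x_0-\eta,r)\le1-\lambda_0$, so $\eta$ fails to lie in $\overline{B(x_0,\lambda,r)}$ once the roughness parameters are taken small enough, contradicting the hypothesis and finishing (2).

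The step I expect to be the real obstacle --- and the only one not reducible to the routine $t$-norm/$t$-conorm and ideal--filter bookkeeping already seen in Theorems \ref{thm3.3}--\ref{thm3.10} --- is this extraction of an $\mathcal{I}_2$-cluster point out of the ``large'' index set $C_1$. In the classical rough-convergence theory it is precisely here that one invokes sequential compactness (Bolzano--Weierstrass when $X$ is finite dimensional, or boundedness of $\{x_{mn}\}$ together with a diagonal subsequence argument in general); since, by the $\mathcal{I}_2$-boundedness theorem proved above, $\mathcal{I}_2^{(\mu,\nu)}\text{-}LIM_{x_{mn}}^r\neq\emptyset$ already forces $\{x_{mn}\}$ to be $\mathcal{I}_2$-bounded, that is the natural framework in which to carry the extraction through.
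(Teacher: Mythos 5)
Your proposal follows essentially the same route as the paper's proof: part (1) via the same two-index-set device (the cluster-point set is not in $\mathcal{I}_2$, the rough-limit exceptional set is, so their ``difference'' is non-empty and the $\star$/$\circ$ triangle inequalities apply at a common index), the second equality in (2) via the symmetry $\mu(-z,t)=\mu(z,t)$, $\nu(-z,t)=\nu(z,t)$, and the remaining inclusion by contraposition. Your part (1) is in fact slightly more careful than the paper's: the triangle inequality only bounds $\mu(x_0-\eta,r+\varepsilon)$, and your explicit limit $\varepsilon\downarrow 0$ using continuity of $\mu(x_0-\eta,\cdot)$ in $t$ is exactly what is needed to land on $\mu(x_0-\eta,r)\ge 1-\lambda$; the paper writes $\mu(\gamma-x_0,r)\ge\mu(x_{ij}-x_0,\varepsilon)\star\mu(x_{ij}-\gamma,r+\varepsilon)$, which is not literally an instance of the triangle axiom.

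The step you single out as the real obstacle --- extracting from the non-small exceptional set an $\mathcal{I}_2$-cluster point $x_0$ with $\mu(\eta-x_0,r+\varepsilon_0)\le 1-\lambda_0$ --- is precisely the step the paper disposes of in one unjustified clause (``which gives there exists an $\mathcal{I}_2$-cluster point $x_0$ \dots''). Your caution is warranted: this implication is not automatic in an arbitrary IFNS. A bounded sequence in an infinite-dimensional space may have no cluster points at all (e.g.\ the standard basis of $\ell^2$ under the induced intuitionistic fuzzy norm), in which case $\Gamma_{(x_{mn})}(\mathcal{I}_2^{(\mu,\nu)})=\emptyset$, the intersection in (2) degenerates to all of $X$, and the asserted equality with $\mathcal{I}_2^{(\mu,\nu)}\text{-}LIM_{x_{mn}}^r$ fails. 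So the extraction genuinely requires a compactness hypothesis (finite dimensionality, as in Phu's original setting, or sequential compactness of the relevant closed balls) that neither your sketch nor the paper supplies; $\mathcal{I}_2$-boundedness alone, which is all the earlier theorem gives you, is not enough. In short: same approach as the paper, and the gap you flag is real --- it is present, unacknowledged, in the paper's own proof of the inclusion $\{\eta\in X:\Gamma_{(x_{mn})}(\mathcal{I}_2^{(\mu,\nu)})\subseteq\overline{B(\eta,\lambda,r)}\}\subseteq\mathcal{I}_2^{(\mu,\nu)}\text{-}LIM_{x_{mn}}^r$.
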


\begin{proof}
   \begin{enumerate}
       \item \label{Part1} For a given $\lambda\in (0,1)$, choose $s_1,s_2\in(0,1)$ such that $(1-s_1)\star (1-s_2)>1-\lambda$ and $s_1\circ s_2<\lambda$. If possible, we suppose that there exists an element $x_0\in \Gamma_{({x_{mn}})}(\mathcal{I}_2^{(\mu,\nu)})$ and $\gamma\in \mathcal{I}_2^{(\mu,\nu)}\text{-}LIM_{x_{mn}}^r$ such that $\gamma\notin \overline{B(x_0,\lambda,r)}$ i.e., $ \mu(\gamma-x_0,r)<1-\lambda$ and $\nu(\gamma-x_0,r)>\lambda$. Let $\varepsilon>0$ be given. Since $\gamma\in \mathcal{I}_2^{(\mu,\nu)}\text{-}LIM_{x_{mn}}^r$ , the sets $M_1=\{(m,n)\in\mathbb{N}\times\mathbb{N}: \mu(x_{mn}-x_0,\varepsilon)>1-s_1 \ \text{and}\ \nu(x_{mn}-x_0,\varepsilon)<s_1 \}\notin\mathcal{I}_2$ and $M_2=\{ (m,n)\in\mathbb{N}\times\mathbb{N}: \mu(x_{mn}-\gamma,r+\varepsilon)\leq 1-s_2 \ \text{or}\ \nu(x_{mn}-\gamma, r+\varepsilon)\geq s_2\}\in \mathcal{I}_2$. Now for $(i,j)\in M_1\cap M_2^c$, we have $\mu(\gamma-x_0,r)\geq \mu(x_{ij}-x_0,\varepsilon)\star\mu(x_{ij}-\gamma,r+\varepsilon)>(1-s_1)\star(1-s_2)>1-\lambda$ and $\nu(\gamma-x_0,r)\leq \nu(x_{ij}-x_0,\varepsilon)\circ \nu(x_{ij}-\gamma,r+\varepsilon)<s_1\circ s_2<\lambda$, which is a contradiction. Therefore $\gamma\in\overline{B(x_0,\lambda,r)}$. Hence $\mathcal{I}_2^{(\mu,\nu)}\text{-}LIM_{x_{mn}}^r\subseteq \overline{B(x_0,\lambda,r)}$.
       \item Using Part \ref{Part1}, we get \begin{equation}\label{eqn3.3}
           \mathcal{I}_2^{(\mu,\nu)}\text{-}LIM_{x_{mn}}^r\subseteq \bigcap_{x_0\in \Gamma_{({x_{mn}})}(\mathcal{I}_2^{(\mu,\nu)})}\overline{B(x_0,\lambda,r)}.
       \end{equation} Now, let $\beta\in \bigcap_{x_0\in \Gamma_{({x_{mn}})}(\mathcal{I}_2^{(\mu,\nu)})}\overline{B(x_0,\lambda,r)}$. So, we have $\mu(\beta-x_0,r)\geq 1-\lambda$ and $\nu(\beta-x_0,r)\leq \lambda$ for $x_0\in \Gamma_{({x_{mn}})}(\mathcal{I}_2^{(\mu,\nu)})$ and, therefore $\Gamma_{({x_{mn}})}(\mathcal{I}_2^{(\mu,\nu)})\subseteq \overline{B(\beta,\lambda,r)}$ i.e., we can write \begin{equation}\label{eqn3.4}
           \bigcap_{x_0\in \Gamma_{({x_{mn}})}(\mathcal{I}_2^{(\mu,\nu)})}\overline{B(x_0,\lambda,r)}\subseteq \{\eta\in X: \Gamma_{({x_{mn}})}(\mathcal{I}_2^{(\mu,\nu)})\subseteq \overline{B(\eta,\lambda,r)} \}
       \end{equation} Now we shall show that $\{\eta\in X: \Gamma_{({x_{mn}})}(\mathcal{I}_2^{(\mu,\nu)})\subseteq \overline{B(\eta,\lambda,r)} \}\subseteq \mathcal{I}_2^{(\mu,\nu)}\text{-}LIM_{x_{mn}}^r$. Let $\beta \notin \mathcal{I}_2^{(\mu,\nu)}\text{-}LIM_{x_{mn}}^r$. Then for $\varepsilon>0$, $\{(m,n)\in\mathbb{N}\times\mathbb{N}: \mu(x_{mn}-\beta,r+\varepsilon)\leq 1-\lambda \ \text{or}\ \nu(x_{mn}-\beta,r+\varepsilon)\geq \lambda \}\notin\mathcal{I}_2$, which gives there exists an $\mathcal{I}_2$-cluster point $x_0$ for the double sequence $\{x_{mn}\}$ with $\mu(\beta-x_0,r+\varepsilon)\leq 1-\lambda$ and $\beta-x_0,r+\varepsilon)\geq \lambda$. Hence $ \Gamma_{({x_{mn}})}(\mathcal{I}_2^{(\mu,\nu)})\nsubseteq \overline{B(\beta,\lambda,r)}$ and so, $\beta\notin \{\eta\in X: \Gamma_{({x_{mn}})}(\mathcal{I}_2^{(\mu,\nu)})\subseteq \overline{B(\eta,\lambda,r)} \}$. So, \begin{equation}\label{eqn3.5}
           \{\eta\in X: \Gamma_{({x_{mn}})}(\mathcal{I}_2^{(\mu,\nu)})\subseteq \overline{B(\eta,\lambda,r)} \}\subseteq \mathcal{I}_2^{(\mu,\nu)}\text{-}LIM_{x_{mn}}^r.
       \end{equation} Therefore from Equations \ref{eqn3.3}, \ref{eqn3.4} and \ref{eqn3.5}, we have $$\mathcal{I}_2^{(\mu,\nu)}\text{-}LIM_{x_{mn}}^r=\bigcap_{x_0\in \Gamma_{({x_{mn}})}(\mathcal{I}_2^{(\mu,\nu)})}\overline{B(x_0,\lambda,r)}=\{\eta\in X: \Gamma_{({x_{mn}})}(\mathcal{I}_2^{(\mu,\nu)})\subseteq \overline{B(\eta,\lambda,r)} \}.$$ This completes the proof.
   \end{enumerate}
\end{proof}

\begin{theorem}\label{thm3.12}
     Let $\{x_{mn}\}$ be a double sequence in an IFNS $(X,\mu,\nu,\star,\circ )$ such that $x_{mn}\xrightarrow{\mathcal{I}_2^{(\mu,\nu)}}y_*$. Then, for any $\lambda\in (0,1)$ and $r>0$, we have $\mathcal{I}_2^{(\mu,\nu)}\text{-}LIM_{x_{mn}}^r=\overline{B(y_*,\lambda,r)}$. 
\end{theorem}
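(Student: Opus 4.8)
The plan is to read this identity off from the two cluster-point results already proved, so that no fresh $t$-norm estimate is needed. First I would observe that the hypothesis $x_{mn}\xrightarrow{\mathcal{I}_2^{(\mu,\nu)}}y_*$ is exactly the hypothesis of Theorem \ref{thm3.7}; applying that theorem shows that $\{x_{mn}\}$ has a single $\mathcal{I}_2$-cluster point, that is, $\Gamma_{({x_{mn}})}(\mathcal{I}_2^{(\mu,\nu)})=\{y_*\}$.

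Next I would feed this into Theorem \ref{thm3.11}(2), which gives, for every $\lambda\in(0,1)$ and $r>0$,
\[
\mathcal{I}_2^{(\mu,\nu)}\text{-}LIM_{x_{mn}}^r=\bigcap_{x_0\in \Gamma_{({x_{mn}})}(\mathcal{I}_2^{(\mu,\nu)})}\overline{B(x_0,\lambda,r)}.
\]
Substituting $\Gamma_{({x_{mn}})}(\mathcal{I}_2^{(\mu,\nu)})=\{y_*\}$ collapses the intersection to the single closed ball $\overline{B(y_*,\lambda,r)}$, so $\mathcal{I}_2^{(\mu,\nu)}\text{-}LIM_{x_{mn}}^r=\overline{B(y_*,\lambda,r)}$, as claimed. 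The same conclusion drops out of the alternative description in Theorem \ref{thm3.11}(2): the condition $\Gamma_{({x_{mn}})}(\mathcal{I}_2^{(\mu,\nu)})=\{y_*\}\subseteq\overline{B(\eta,\lambda,r)}$ says $\mu(\eta-y_*,r)\ge 1-\lambda$ and $\nu(\eta-y_*,r)\le\lambda$, which by the symmetry $\mu(-x,t)=\mu(x,t)$ and $\nu(-x,t)=\nu(x,t)$ (axioms (4) and (10)) is exactly $\eta\in\overline{B(y_*,\lambda,r)}$.

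If a self-contained argument is preferred, I would prove the two inclusions by hand. The inclusion $\mathcal{I}_2^{(\mu,\nu)}\text{-}LIM_{x_{mn}}^r\subseteq\overline{B(y_*,\lambda,r)}$ is just Theorem \ref{thm3.11}(1) applied to the cluster point $y_*$. For the reverse inclusion, given $\xi\in\overline{B(y_*,\lambda,r)}$ and $\varepsilon>0$, I would choose $s\in(0,1)$ with $(1-s)\star(1-s)>1-\lambda$ and $s\circ s<\lambda$, discard the set $\{(m,n):\mu(x_{mn}-y_*,\varepsilon)\le 1-s\ \text{or}\ \nu(x_{mn}-y_*,\varepsilon)\ge s\}$, which lies in $\mathcal{I}_2$ by the $\mathcal{I}_2$-convergence hypothesis, and on its complement estimate $\mu(x_{mn}-\xi,r+\varepsilon)\ge\mu(x_{mn}-y_*,\varepsilon)\star\mu(y_*-\xi,r)$ from below, with the companion bound for the $\nu$-term through $\circ$. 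I do not expect any genuine obstacle here; the one point that needs care is the standard reconciliation of the closed-ball inequalities ($\ge 1-\lambda$, $\le\lambda$) with the strict inequalities in Definition \ref{defi3.1} via the auxiliary parameter $s$, precisely the device already used in the proofs of Theorems \ref{thm3.3} and \ref{thm3.11}.
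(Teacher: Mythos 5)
Your proposal is correct, and your primary route is genuinely leaner than the paper's. The paper proves only the inclusion $\mathcal{I}_2^{(\mu,\nu)}\text{-}LIM_{x_{mn}}^r\subseteq \overline{B(y_*,\lambda,r)}$ by citing Theorems \ref{thm3.7} and \ref{thm3.11}, and then establishes the reverse inclusion $\overline{B(y_*,\lambda,r)}\subseteq \mathcal{I}_2^{(\mu,\nu)}\text{-}LIM_{x_{mn}}^r$ by hand: it picks $\lambda_1$ with (in intent) $(1-\lambda_1)\star(1-\lambda)>1-\lambda_2$ and $\lambda_1\circ\lambda<\lambda_2$, discards the $\mathcal{I}_2$-small set where $\mu(x_{mn}-y_*,\varepsilon)\leq 1-\lambda_1$ or $\nu(x_{mn}-y_*,\varepsilon)\geq\lambda_1$, and runs exactly the $\star$/$\circ$ estimate you sketch in your third paragraph. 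You instead observe that once Theorem \ref{thm3.7} gives $\Gamma_{({x_{mn}})}(\mathcal{I}_2^{(\mu,\nu)})=\{y_*\}$, the first equality of Theorem \ref{thm3.11}(2) collapses to the single ball $\overline{B(y_*,\lambda,r)}$, which yields \emph{both} inclusions at once; this is a legitimate and cleaner deduction, with the caveat that it inherits whatever one thinks of the paper's justification of the inclusion $\{\eta:\Gamma_{({x_{mn}})}(\mathcal{I}_2^{(\mu,\nu)})\subseteq\overline{B(\eta,\lambda,r)}\}\subseteq \mathcal{I}_2^{(\mu,\nu)}\text{-}LIM_{x_{mn}}^r$ inside Theorem \ref{thm3.11}(2), whereas the paper's direct argument for $\overline{B(y_*,\lambda,r)}\subseteq\mathcal{I}_2^{(\mu,\nu)}\text{-}LIM_{x_{mn}}^r$ is self-contained. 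Your symmetry remark ($\mu(-x,t)=\mu(x,t)$ via axioms (4) and (10)) correctly handles the swap of center and point in the alternative description.

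One small slip in your fallback sketch: the auxiliary parameter should be chosen against the \emph{target} tolerance, not against $\lambda$. Since the ball only gives the non-strict bounds $\mu(y_*-\xi,r)\geq 1-\lambda$ and $\nu(y_*-\xi,r)\leq\lambda$, for a prescribed $\lambda_2\in(0,1)$ you need $s$ with $(1-s)\star(1-\lambda)>1-\lambda_2$ and $s\circ\lambda<\lambda_2$, rather than $(1-s)\star(1-s)>1-\lambda$; your condition pairs two copies of $s$ and aims at $\lambda$ itself, which does not match the factors $\mu(x_{mn}-y_*,\varepsilon)>1-s$ and $\mu(y_*-\xi,r)\geq 1-\lambda$ actually appearing in the estimate. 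The fix is routine (and is what the paper does, modulo its own typographical slip), so this does not affect the validity of your argument.
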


\begin{proof}
  Let $\lambda_2\in (0,1)$. Choose $\lambda_1\in (0,1)$ such that $\lambda_1\star \lambda >\lambda_2$ and $\lambda_1\circ \lambda<\lambda_2$.  Let $\varepsilon>0$ be given. Since $x_{mn}\xrightarrow{\mathcal{I}_2^{(\mu,\nu)}}y_*$, the set $P= \{(m,n)\in \mathbb{N}\times \mathbb{N}: \mu(x_{mn}-y_*,\varepsilon)\leq 1-\lambda_1 \ \text{or}\ \nu(x_{mn}-y_*,\varepsilon)\geq \lambda_1\}\in \mathcal{I}_2$. Now, let  $\xi\in \overline{B(y_*,\lambda,r)}$. Then $\mu(\xi-y_*,r)\geq 1-\lambda$ and $\nu(\xi-y_*,r)\leq \lambda$. Now for $(m,n)\in P^c$, we have $\mu(x_{mn}-\xi,r+\varepsilon)\geq \mu(x_{mn}-y_*,\varepsilon)\star \mu(\xi-y_*,r)>(1-\lambda_1)\star(1-\lambda)>1-\lambda_2$ and $\nu(x_{mn}-\xi,r+\varepsilon)\leq \nu(x_{mn}-y_*,\varepsilon)\circ \nu(\xi-y_*,r)<\lambda_1\circ \lambda<\lambda_2$. Therefore $\{(m,n)\in \mathbb{N}\times \mathbb{N}: \mu(x_{mn}-\xi,r+\varepsilon)\leq 1-\lambda_2 \ \text{or}\  \nu(x_{mn}-\xi,r+\varepsilon)\geq \lambda_2\}\subset P$. Hence $\xi\in \mathcal{I}_2^{(\mu,\nu)}\text{-}LIM_{x_{mn}}^r$. Consequently, $\overline{B(y_*,\lambda,r)}\subseteq \mathcal{I}_2^{(\mu,\nu)}\text{-}LIM_{x_{mn}}^r$. Now, using Theorem \ref{thm3.7} and \ref{thm3.11}, we have $\mathcal{I}_2^{(\mu,\nu)}\text{-}LIM_{x_{mn}}^r\subseteq \overline{B(y_*,\lambda,r)}$. Therefore $\mathcal{I}_2^{(\mu,\nu)}\text{-}LIM_{x_{mn}}^r=\overline{B(y_*,\lambda,r)}$. This completes the proof.
\end{proof}

\begin{theorem}
    Let $\{x_{mn}\}$ be a double sequence in an IFNS $(X,\mu,\nu,\star,\circ )$ such that $x_{mn}\xrightarrow{\mathcal{I}_2^{(\mu,\nu)}}L$. Then, for any $\lambda\in (0,1)$,  $\Gamma_{({x_{mn}})}^r(\mathcal{I}_2^{(\mu,\nu)})=\mathcal{I}_2^{(\mu,\nu)}\text{-}LIM_{x_{mn}}^r$ for some $r>0$.
\end{theorem}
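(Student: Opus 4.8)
The plan is to observe that this statement is an immediate corollary of three results already established in this section, namely Theorem \ref{thm3.7}, Theorem \ref{thm3.10}, and Theorem \ref{thm3.12}. First I would invoke Theorem \ref{thm3.7}: since $x_{mn}\xrightarrow{\mathcal{I}_2^{(\mu,\nu)}}L$, the ordinary $\mathcal{I}_2$-cluster set satisfies $\Gamma_{({x_{mn}})}(\mathcal{I}_2^{(\mu,\nu)})=\{L\}$, i.e. it collapses to a single point.

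Next I would feed this into Theorem \ref{thm3.10}, which for a fixed $\lambda\in(0,1)$ and $r>0$ represents the rough $\mathcal{I}_2$-cluster set as the union $\bigcup_{x_0\in \Gamma_{({x_{mn}})}(\mathcal{I}_2^{(\mu,\nu)})}\overline{B(x_0,\lambda,r)}$. Because the index set of this union has been reduced to the single element $L$, the union is just one closed ball, and hence $\Gamma_{({x_{mn}})}^r(\mathcal{I}_2^{(\mu,\nu)})=\overline{B(L,\lambda,r)}$.

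Finally I would apply Theorem \ref{thm3.12} with $y_*=L$: under the very same $\mathcal{I}_2$-convergence hypothesis it gives $\mathcal{I}_2^{(\mu,\nu)}\text{-}LIM_{x_{mn}}^r=\overline{B(L,\lambda,r)}$. Comparing the last two identities yields $\Gamma_{({x_{mn}})}^r(\mathcal{I}_2^{(\mu,\nu)})=\mathcal{I}_2^{(\mu,\nu)}\text{-}LIM_{x_{mn}}^r$, which is the desired conclusion.

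Since every ingredient is already available, there is no genuine obstacle here; the only points meriting a moment's care are verifying that the hypotheses of Theorem \ref{thm3.10} and Theorem \ref{thm3.12} are both met in the present setting (they are, as each only requires $\mathcal{I}_2$-convergence of $\{x_{mn}\}$ together with a fixed $\lambda$ and $r$), and noting that the singleton nature of $\Gamma_{({x_{mn}})}(\mathcal{I}_2^{(\mu,\nu)})$ is exactly what reduces the union in Theorem \ref{thm3.10} to a single ball rather than an honest union indexed by varying $x_0$.
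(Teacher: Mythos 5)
Your proposal is correct and follows exactly the paper's own argument: Theorem \ref{thm3.7} collapses $\Gamma_{({x_{mn}})}(\mathcal{I}_2^{(\mu,\nu)})$ to $\{L\}$, Theorem \ref{thm3.10} then reduces the rough cluster set to $\overline{B(L,\lambda,r)}$, and Theorem \ref{thm3.12} identifies that ball with $\mathcal{I}_2^{(\mu,\nu)}\text{-}LIM_{x_{mn}}^r$. No further comment is needed.
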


\begin{proof}
    Since $x_{mn}\xrightarrow{\mathcal{I}_2^{(\mu,\nu)}}L$, therefore from Theorem \ref{thm3.7}, $\Gamma_{({x_{mn}})}(\mathcal{I}_2^{(\mu,\nu)})=\{L\}$. Again from Theorem \ref{thm3.10}, $\Gamma_{({x_{mn}})}^r(\mathcal{I}_2^{(\mu,\nu)})=\overline{B(L,\lambda,r)}$. And, from Theorem \ref{thm3.12}, $\overline{B(L,\lambda,r)}=\mathcal{I}_2^{(\mu,\nu)}\text{-}LIM_{x_{mn}}^r$. Therefore $\Gamma_{({x_{mn}})}^r(\mathcal{I}_2^{(\mu,\nu)})=\mathcal{I}_2^{(\mu,\nu)}\text{-}LIM_{x_{mn}}^r$. This completes the proof.
\end{proof}

\subsection*{Acknowledgments}
 The second author is grateful to The Council of Scientific and Industrial Research (CSIR), HRDG, India, for the grant of Senior Research Fellowship during the preparation of this paper.

\end{document}